\numberwithin{equation}{section}
\newtheorem{prop}{Proposition}[section]
\newtheorem{theo}[prop]{Theorem}
\newtheorem{lemm}[prop]{Lemma}
\newtheorem{rema}[prop]{Remark}
\def\di{\displaystyle}
\def\Dint{\displaystyle\int}
\def\O{\Omega}
\def\o{\omega}
\def\n{\nabla}
\def\P{\partial}
\def\Dint{\displaystyle\int}
\def\di{\displaystyle}
\def\<{\langle}
\def\>{\rangle}
\begin{document}
\title[Warped product spaces]{ A volume preserving flow and the isoperimetric problem in warped product spaces}

\author{Pengfei Guan, Junfang Li, and Mu-Tao Wang}

\address{Department of Mathematics and Statistics\\
McGill University\\
Montreal, Quebec H3A 0B9, Canada}
\email{guan@math.mcgill.ca}

\address{Department of Mathematics\\
University of Alabama at Birmingham\\
Birmingham, AL 35294}
\email{jfli@uab.edu}

\address{
   Department of Mathematics\\
Columbia University\\
2990 Broadway, New York, NY 10027
   }
\email{mtwang@math.columbia.edu}
\thanks{Research of first author was supported in part by an NSERC Discovery Grant.
This material is based upon work supported by the National Science
Foundation under Grant Numbers DMS 1405152 (Mu-Tao Wang).}

\begin{abstract}
    In this article, we continue the work in \cite{GL} and study a normalized hypersurface flow in the more general ambient setting of warped product spaces. This flow preserves the volume of the bounded domain enclosed by a graphical hypersurface, and monotonically decreases the hypersurface area. As an application, the isoperimetric problem in warped product spaces is solved for such domains.\end{abstract}

\subjclass{53C23}
\date{\today}
\maketitle

\section{Introduction}
Let $(\mathbf B^n,\tilde g)$ be a closed Riemannian manifold. Let $\phi=\phi(r)$ be a smooth positive function defined on the interval $[r_0,\bar r]$ for some $r_0 <\bar r $.  We consider a Riemannian manifold $(\mathbf N^{n+1},\bar g)$ (possibly with boundary) with the warped product structure,
\begin{equation}
  \bar g=dr^2+\phi^2\tilde g,      \,\,\, r\in [r_0, \bar{r}]
  \label{warped metric}
\end{equation}
where $\tilde g$ is the metric of the manifold $\mathbf B^n$.  $\mathbf N^{n+1}$ is naturally equipped with a conformal Killing field $X=\phi (r)\partial_r$.  Let $M$ be a smooth closed embedded hypersurface in  $\mathbf N^{n+1}$, which is parametrized by an embedding $F_0$. We consider the following evolution equation for a family of embeddings of hypersurfaces with $F_0$ as an initial data, i.e. $F(\cdot, t)=F_0$:

\begin{equation}
    \frac{\partial F}{\partial t}=(n\phi'-uH)\nu,
    \label{eflow0}
  \end{equation}
  where $\nu$ is the outward unit normal vector field,  $H$ is the mean curvature, and $u=\<X,\nu\>$ is the support function of the hypersurface defined by $F(\cdot, t)$. A hypersurface $M$ is said to be  {\it graphical} if it is defined by $r=\rho(p), p\in \mathbf B^n$ for a smooth function $\rho$ on $\mathbf B^n$. When $(\mathbf B^n,\tilde g)$ is the standard unit sphere $\mathbb S^n$ in $\mathbb R^{n+1}$ and $\phi(r)=\sin(r), r, \sinh(r)$,  $(\mathbf N^{n+1},\bar g)$ represents $\mathbb S^{n+1}, \mathbb R^{n+1}, \mathbb H^{n+1}$ respectively. In these special cases, flow (\ref{eflow0}) was studied in \cite{GL} in connection with the isoperimetric problem. In this article, we consider (\ref{eflow0}) in the more general ambient setting of warped product spaces.

\medskip

Below are our main theorems.
\begin{theo}   \label{main thm}
    Let $M_0$ be a smooth graphical hypersurface in $(\mathbf N^{n+1},\bar g)$ with $n\ge2$ and $\bar{g}$ in \eqref{warped metric}. If $\phi(r)$ and  $\tilde g$ satisfy the following conditions:
    \begin{equation}\label{condition}
    \begin{aligned}
       & \tilde{Ric}\geq (n-1)K \tilde{g},\\
       & 0\le (\phi')^2-\phi''\phi\le K \text{ on } [r_0, \bar{r}]
    \end{aligned}
    \end{equation}
where $K>0$ is a constant and $\tilde{Ric}$ is the Ricci curvature of $\tilde{g}$, then the evolution equation \eqref{eflow0} with $M_0$ as the initial data
has a smooth solution for $t\in [0,\infty)$. Moreover, the solution hypersurfaces converge exponentially to a level set of $r$ as $t\rightarrow \infty$.
\end{theo}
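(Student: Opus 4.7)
The strategy is a continuation-method argument. First, since $M_0$ is graphical, I would write \eqref{eflow0} as a scalar parabolic PDE for the graph function $\rho(p,t)$ on $\mathbf{B}^n$ (in which $u$, $H$, and $\nu$ become explicit expressions in $\rho$ and its derivatives), and obtain short-time existence from standard parabolic theory, valid as long as $u>0$. The two structural features that drive the analysis are both rooted in the fact that $X=\phi\partial_r$ is a conformal Killing field, $\bar\nabla_i X_j=\phi'\bar g_{ij}$. The tangential divergence identity $\mathrm{div}_{M_t}(X^T)=n\phi'-uH$ integrates on the closed $M_t$ to the Minkowski-type identity $\int_{M_t}(uH-n\phi')\,dA=0$, which gives
\begin{equation*}
\frac{d}{dt}\mathrm{Vol}(\Omega_t)=\int_{M_t}(n\phi'-uH)\,dA=0,
\end{equation*}
so the enclosed volume is conserved. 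Combined with a Heintze--Karcher-type inequality for warped products (whose standing hypotheses are provided by \eqref{condition}), it yields the area monotonicity $\frac{d}{dt}|M_t|=\int_{M_t}(n\phi'-uH)H\,dA\le 0$, with equality characterizing coordinate slices.

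The $C^0$ estimate follows from the maximum principle: at a spatial maximum of $\rho(\cdot,t)$, the outward normal is $\partial_r$, $u=\phi$, and a pointwise comparison of second fundamental forms with the tangent slice $\{r=\rho_{\max}\}$ gives $H\ge n\phi'/\phi$, so $n\phi'-uH\le 0$ at this point; the symmetric argument at the minimum shows $\rho(\cdot,t)$ stays in its initial range, hence inside $(r_0,\bar r)$. For the $C^1$ estimate -- equivalently, a positive lower bound for $u$ keeping $M_t$ uniformly graphical -- I would apply the maximum principle to an auxiliary function built from $u$ and $\phi'$, a natural candidate being $\log u-\lambda\phi(\rho)$ for a suitable $\lambda$. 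The sign condition $(\phi')^2-\phi''\phi\ge 0$ produces a favorable zero-order term in the evolution of this quantity, while the matched bounds $(\phi')^2-\phi''\phi\le K$ and $\tilde{Ric}\ge(n-1)K\tilde g$ together control the ambient curvature contributions arising through the Gauss equation.

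The main analytic obstacle is the $C^2$ bound, that is, a uniform upper bound on the mean curvature $H$. The evolution equation for $H$ along \eqref{eflow0} contains a reaction term of the form $u|A|^2H$ that threatens finite-time blow-up, together with ambient curvature contributions. I would attack this via the maximum principle applied to an auxiliary quantity such as $\log H-A\log u$ with $A$ large; the $C^1$ estimate allows the reaction term to be absorbed by the $-A\log u$ piece, while the matched two-sided control in \eqref{condition} -- with the same constant $K$ linking the Ricci lower bound on $\tilde g$ to the warping upper bound on $\phi$ -- ensures the ambient curvature terms cancel with the right sign. This cancellation is the technical heart of the proof. Once $H$ is bounded, the scalar PDE for $\rho$ is uniformly parabolic; Krylov--Safonov followed by Schauder estimates then give uniform $C^{k,\alpha}$ bounds for all $k$, from which long-time existence is immediate.

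For exponential convergence, the uniform higher-order bounds extract smooth subsequential limits of $M_t$ on which area is stationary; this forces saturation of the Heintze--Karcher inequality and, by the corresponding rigidity statement under \eqref{condition}, identifies any such limit as a coordinate slice $\{r=r_\infty\}$, with $r_\infty$ fixed by the conserved volume. Finally, linearizing \eqref{eflow0} about this slice yields a linear parabolic equation whose spatial part is self-adjoint elliptic on $(\mathbf{B}^n,\tilde g)$; a first-eigenvalue lower bound via a Lichnerowicz-type estimate coming from $\tilde{Ric}\ge(n-1)K\tilde g$ gives exponential $L^2$ decay of $\rho-r_\infty$, which upgrades to exponential $C^\infty$ decay by interpolation with the uniform $C^k$ bounds already established.
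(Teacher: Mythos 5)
Your overall scaffolding (convert to a scalar quasilinear parabolic PDE, establish $C^0$ and $C^1$ a priori bounds, bootstrap to long-time existence, then identify and converge to the limit slice) matches the paper's, and your $C^0$ estimate is essentially the one used. But there are two substantive misreadings of where the difficulty actually lies.

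First, and most importantly, you identify the $C^2$/mean-curvature bound as ``the main analytic obstacle'' and ``the technical heart of the proof,'' proposing a maximum-principle argument on $\log H - A\log u$. In fact no direct $H$-estimate is needed for Theorem~\ref{main thm}. Once the $C^0$ and gradient bounds are in hand, the scalar equation $\partial_t\rho = n\frac{\phi'}{\phi}\omega - \frac{1}{\phi\omega^3}\tilde H$ is quasilinear with principal part $\frac{1}{\omega^3}(\omega^2\tilde g^{ij}-\rho^i\rho^j)\rho_{ij}$, whose coefficients depend only on $\rho$ and $\tilde\nabla\rho$ and are uniformly elliptic under the $C^1$ bound (eigenvalues $\omega^{-1}$ and $\phi^2\omega^{-3}$ are pinched). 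Krylov--Safonov and Schauder then give uniform $C^{k,\alpha}$ bounds and long-time existence with no separate curvature estimate; your ``once $H$ is bounded, the equation is uniformly parabolic'' has the implication reversed. The paper's Section~\ref{uH} bounds on $u$ and $H$ are explicitly flagged as not needed for the main theorem, and indeed the positive lower bound for $u$ there (Proposition~\ref{gradient by u}) requires the \emph{strict} inequality $(\phi')^2-\phi''\phi>0$, so your proposed auxiliary function $\log u - \lambda\phi(\rho)$ would not survive the boundary case $(\phi')^2-\phi''\phi=0$ allowed by \eqref{condition}. The paper instead runs the maximum principle directly on $|\tilde\nabla\rho|^2$, which works under the stated hypotheses.

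Second, the exponential convergence does not require the subsequential-limit $\to$ rigidity $\to$ linearization chain you outline. The same gradient computation that gives boundedness in fact yields $\mathcal{L}\bigl(\tfrac{|\tilde\nabla\rho|^2}{2}\bigr) \le -\tfrac{1}{\omega}\tilde Ric(\tilde\nabla\rho,\tilde\nabla\rho) \le -\alpha|\tilde\nabla\rho|^2$ for a uniform $\alpha>0$ (using $\tilde{Ric}\ge (n-1)K\tilde g$, $K>0$, and the $C^0$, $C^1$ bounds). The maximum principle then gives $\max_{M(t)}|\tilde\nabla\rho|^2 \le e^{-\alpha t}\max_{M(0)}|\tilde\nabla\rho|^2$ directly, and together with the conserved enclosed volume this pins down the limit slice and yields exponential convergence without any linearization step. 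Relatedly, your remark that the gradient estimate uses ``the matched bounds'' $(\phi')^2-\phi''\phi\le K$ and $\tilde{Ric}\ge (n-1)K\tilde g$ is inaccurate: only the lower bound $(\phi')^2-\phi''\phi\ge 0$ enters the gradient estimate; the upper bound $(\phi')^2-\phi''\phi\le K$ is used only for the area monotonicity (Theorem~\ref{thm mono hyperbolic}), which in turn is relevant to Theorem~\ref{main thm2}, not Theorem~\ref{main thm}.
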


As an application, we obtain a solution to the isoperimetric problem for warped product spaces.   Let $S(r)$ be a level set of $r$ and $B(r)$ be the bounded domain enclosed by $S(r)$ and $S(r_0)$. The volume of $B(r)$ and surface area of $S(r)$, both positive functions of $r$, are denoted as $V(r)$ and $A(r)$, respectively. Note that $V=V(r)$ is strictly increasing function of $r$.  Consider the single variable function $\xi(x)$ that satisfies
\begin{align}
    A(r)=\xi(V (r)),
    \label{defining function}
\end{align}
for any $r\in[ r_0,\bar r]$.  The function $\xi(x)$ is well-defined.

\begin{theo}\label{main thm2}
    Let $\O\subset \mathbf N^{n+1}$ be a domain bounded by a smooth graphical hypersurface $M$ and $S(r_0)$.    We assume $\phi(r)$ and  $\tilde g$ satisfy the conditions \eqref{condition} in Theorem \ref{main thm}, then
    \begin{align}
        Area(M)\ge \xi(Vol(\O)),
        \label{iso prob}
    \end{align}
    where $Area(M)$ is the area of $M$ and $Vol(\O)$ is the volume of $\Omega$, and function $\xi$ is defined in (\ref{defining function}). If, in addition to \eqref{condition}, either  $(\phi')^2-\phi''\phi< K$ or $\tilde{Ric}> (n-1)K \tilde{g} \text{ on } [r_0, \bar{r}]$ then  ``=" is attained in \eqref{iso prob} if and only if $M$ is a level set of $r$.
\end{theo}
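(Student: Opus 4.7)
The plan is to run the flow \eqref{eflow0} with $M$ as the initial hypersurface and to compare the initial configuration with its asymptotic one. By Theorem \ref{main thm} the solution $M_t$ exists for all $t\ge 0$ and converges exponentially to some level set $S(r_*)$, so everything reduces to two monotonicity statements along the flow.

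For \emph{volume preservation}, differentiating $V(t):=\mathrm{Vol}(\Omega_t)$ under normal motion with speed $n\phi'-uH$ gives $\frac{dV}{dt}=\int_{M_t}(n\phi'-uH)\,dA$. Since $X=\phi\,\partial_r$ is a conformal Killing field with $\nabla_Y X=\phi'Y$, a direct computation yields the pointwise identity $\mathrm{div}_{M_t}(X^T)=n\phi'-uH$ for the tangential part of $X$; the divergence theorem on the closed hypersurface $M_t$ then gives $dV/dt\equiv 0$, or equivalently the Minkowski-type identity $\int_{M_t}n\phi'\,dA=\int_{M_t}uH\,dA$.

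For \emph{area monotonicity}, the first variation formula gives $\frac{dA}{dt}=\int_{M_t}H(n\phi'-uH)\,dA$; I would prove this is nonpositive by combining the Minkowski identity above with a Heintze--Karcher type inequality for warped products under assumption \eqref{condition}, together with Cauchy--Schwarz, in the spirit of \cite{GL} for space forms and Brendle's Alexandrov-type arguments. Establishing the sharp Heintze--Karcher inequality in this warped product setting, in a form that uses the curvature hypothesis \eqref{condition} in an essential way, is where the main technical work lies.

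Granted these two monotonicity statements, volume preservation pins down the limit level set uniquely by $V(r_*)=\mathrm{Vol}(\Omega)$, and area monotonicity gives
\[
\mathrm{Area}(M)\ge\lim_{t\to\infty}A(t)=A(r_*)=\xi(V(r_*))=\xi(\mathrm{Vol}(\Omega)),
\]
which is \eqref{iso prob}. For rigidity, if $\mathrm{Area}(M)=\xi(\mathrm{Vol}(\Omega))$ then $A(t)$ is constant along the flow, forcing equality in both the Heintze--Karcher and Cauchy--Schwarz inequalities for every $t$. Under strict inequality in either clause of \eqref{condition}, the rigidity part of Heintze--Karcher should then force every $M_t$, and in particular $M=M_0$, to be umbilical with $H\equiv n\phi'/u$, and a short ODE argument using the strict curvature condition identifies such a hypersurface as a level set of $r$.
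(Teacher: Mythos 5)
Your overall architecture is the right one and matches the paper's: run the flow \eqref{eflow0} starting from $M$, use volume preservation and area monotonicity, invoke the convergence from Theorem \ref{main thm} to conclude $A(0)\ge A(\infty)=A(r^*)=\xi(\mathrm{Vol}(\Omega))$, and analyze the equality case. Your volume-preservation argument (the divergence identity $\mathrm{div}_{M_t}(X^\top)=n\phi'-uH$, equivalently $\Delta\Phi=n\phi'-uH$) is exactly what the paper uses.

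The genuine gap is in the area-monotonicity step. You invoke a ``Heintze--Karcher type inequality for warped products'' as the key input, acknowledge that establishing it is ``where the main technical work lies,'' and never produce it. This is a wrong turn for two reasons. First, a Heintze--Karcher inequality requires $H>0$, i.e.\ mean-convexity, and nothing in the hypotheses of Theorem \ref{main thm2} gives that --- indeed a highlighted feature of this flow is precisely that no convexity of $M_0$ is needed, so an argument that quietly smuggles in $H>0$ would not prove the stated theorem. Second, Heintze--Karcher is a fixed-hypersurface global inequality suited to Alexandrov-type rigidity theorems, not to showing $A'(t)\le 0$ along an evolution. What the paper actually uses is much more direct: a $\sigma_2$-Minkowski identity (Lemma \ref{lemma Minkowski identity}),
\[
(n-1)\int_M\phi'\,\sigma_1\,d\mu=2\int_M\sigma_2\,u\,d\mu+\int_M\bar R_{i\nu}\,\Phi_i\,d\mu,
\]
so that
\[
A'(t)=\int_M(n\phi'-uH)H\,d\mu=\tfrac{n}{n-1}\int_M\bar R_{i\nu}\Phi_i\,d\mu+\int_M\Big(\tfrac{2n}{n-1}\sigma_2-H^2\Big)u\,d\mu.
\]
The second integral is $\le 0$ by the Newton--MacLaurin inequality (and $u>0$ on a graph), and the first is $\le 0$ because \eqref{Ric normal} expresses $g^{ij}\bar R_{\nu e_i}\nabla_j\Phi$ as
\[
-(n-1)\big(K-\phi'^2+\phi\phi''\big)\frac{|\tilde\nabla\rho|^2}{\omega^3}-(\tilde R_{ij}-(n-1)K\tilde g_{ij})\frac{\rho_i\rho_j}{\omega^3},
\]
which is nonpositive exactly under \eqref{condition}. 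No Heintze--Karcher inequality and no mean-convexity are needed. Your proposal is missing the $\sigma_2$-Minkowski identity entirely; you only have the $\sigma_1$-version (which gives $V'=0$), and you do not have a proof of area decay.

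The same issue carries over to your rigidity paragraph: you appeal to ``the rigidity part of Heintze--Karcher,'' but the paper's argument is self-contained --- equality forces equality in Newton--MacLaurin, i.e.\ $\sigma_2=\tfrac{n-1}{2n}\sigma_1^2$, so each $M_t$ is umbilic; then the strict version of either inequality in \eqref{condition} forces the curvature term in \eqref{Ric normal} to vanish only when $\tilde\nabla\rho\equiv 0$, i.e.\ $M$ is a slice.
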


Some remarks are in order.
\begin{rema}\label{rem1}
    \begin{enumerate}
        \item [(i)] The upper bound condition $ (\phi')^2-\phi''\phi\le K$ is needed for the monotonicity property of the flow, see Theorem \ref{thm mono hyperbolic}. Indeed, the condition in this setting implies the corresponding level set of $r$ is a stable CMC, which locally minimizes areas subject to the constraint of fixing enclosed volumes. More details of these conditions can be found in Section \ref{stability}.
                \item [(ii)] The lower bound condition $ (\phi')^2-\phi''\phi\ge0$ is needed for the gradient estimate and this condition is closely related to the notion of ``photon sphere'' in general relativity, see more details in Section \ref{photon}.
        \item[(iii)] The function $A(r)$ is given explicitly by $A(r)=\phi^n (r) Area({\bf B}^n)$ and $V(r)$ is characterized by the ODE: $\frac{dV}{dr}=A(r), V(r_0)=0$.
        To determine the function $\xi$, one can first solve $r$ in terms of $V$ and then plug into the formula of $A(r)$. For example, when $n=1$ and $\phi(r)=\sin (r)$, we deduce that $A(r)=\sqrt{V(r)(4\pi-V(r))}$ or $\xi(x)=\sqrt{x(4\pi-x)}$. When $n=1$ and $\phi(r)=\sinh (r) $, we deduce that $A(r)=\sqrt{ V(r)(4\pi+V(r))}$ or $\xi(x)=\sqrt{x (4\pi+x)}$.
        \item[(iv)] The stability condition is local in nature and concerns only the geometry of the submanifold, while the isoperimetric problem concerns the global geometry of the ambient manifold. There are interesting conjectures about the isoperimetric problem,  see for example \cite{H, M}
        \end{enumerate}\end{rema}

The hypersurface flow (\ref{eflow0}) is a local flow that preserves enclosed volume. This seems to be a novel feature compared with the known flows in the literature. More specifically, equation (\ref{eflow0}) is a pointwise defined parabolic PDE and it preserves the enclosed volume along the flow. To the authors' knowledge, most hypersurface flows in the literature are either local that do not preserve integral geometric quantities, such as volume, surface area, etc., or globally defined that involve some integral terms. For example, in Huisken's famous work on mean curvature flow \cite{H1}, the original local flow is not volume preserving. If one rescales the hypersurface so that the volume is preserved along the flow, then an extra integral term which involves total squared mean curvature and surface area has to be included.  On the other hand, in another paper of Huisken \cite{H3}, a volume preserving mean curvature flow was discussed. The definition of this flow already contains a global quantity related to total mean curvature and surface area. More details of these comparisons can be found in previous work \cite{GL}.

Another advantage of the flow (\ref{eflow0}) is that the existence and exponential convergence  do not depend on any convexity condition of the domain.  This also seems to be a surprising property for hypersurface flows.

The rest of the paper is organized as follows.
In section \ref{hyper}, we discuss hypersurfaces in warped product spaces, and prove a Minkowski identity and monotonic properties along the normalized flow. In section \ref{c1est}, we convert the flow into a parabolic PDE for a graphical hypersurface and prove the $C^0$ estimate, the main gradient estimate and the exponential convergence of the flow under conditions in Theorem \ref{main thm}. In section \ref{uH}, we derive evolution equations for support function and for mean curvature, then obtain bounds for these geometric quantities.  Theorem 1.1 and Theorem 1.2 are proved in section \ref{proofs}. In section \ref{photon}, we discuss the conditions imposed  on the warping function $\phi$. In the last section, we discuss the convergence of the flow in the case when $K=0$ in \eqref{condition}.

\section{A Minkowski identity and the monotonicity}\label{hyper}

Throughout this paper, we use Einstein convention for repeated indexes. We use $\bar g$, $g$, and $\tilde g$ to denote the metrics of the ambient warped product space $\mathbf N^{n+1}$, hypersurface $M^n$, and the base $\mathbf B^n$ respectively. Consequently, we use $\bar \n$, $\n$, and $\tilde\n$ to denote gradient with respect to the metrics $\bar g$, $g$, and $\tilde g$ respectively. Similarly, we have notations such as Laplacian $\bar \Delta$, $\Delta$, and $\tilde \Delta$ in different contexts.

Let $(\mathbf N^{n+1},\bar g)$ be a Riemannian manifold with warped product structure,
\begin{equation}
  \bar g= ds^2=dr^2+\phi^2\tilde g,
  \label{}
\end{equation}
where $\tilde g$ is the metric of the base manifold $\mathbf B^n$ and $\phi=\phi(r)$ is a smooth positive function on $(r_0, \bar r)$ for some $\bar r\le\infty$.

\begin{lemm}
    \label{hessian lemma}
Let $Y=\eta(r)\P_r$ be a vector field. Then the Lie derivative of $Y$ is given by
$\mathcal L_Y \bar g= 2\eta' (dr^2 +\frac{\eta\phi'}{\eta'\phi}\phi^2\tilde g)$. In particular, the vector field $X=\phi(r)\P_r$ is a conformal Killing field, i.e., $\mathcal L_X \bar g=2\phi'(r)\bar g$. Moreover, $D_iX_j = \phi'(r) \bar g_{ij} $.
\end{lemm}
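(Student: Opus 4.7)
The plan is to verify both assertions by a direct computation of the covariant derivatives $\bar D_i Y_j$ in a coordinate chart adapted to the warped product structure, and then symmetrize to obtain the Lie derivative. Choose local coordinates $(r, x^1, \dots, x^n)$ with $x^\alpha$ coordinates on $\mathbf{B}^n$. In these coordinates the nonzero components of $\bar g$ are $\bar g_{rr} = 1$ and $\bar g_{\alpha\beta} = \phi(r)^2 \tilde g_{\alpha\beta}$, while the only nonvanishing Christoffel symbols (beyond those of $\tilde g$) are
\begin{equation*}
\bar\Gamma^r_{\alpha\beta} = -\phi\phi'\,\tilde g_{\alpha\beta}, \qquad \bar\Gamma^\alpha_{r\beta} = \frac{\phi'}{\phi}\,\delta^\alpha_\beta.
\end{equation*}

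Next I would lower the index of $Y = \eta(r)\partial_r$ to get $Y_r = \eta$, $Y_\alpha = 0$, and compute $\bar D_i Y_j = \partial_i Y_j - \bar\Gamma^k_{ij} Y_k$ componentwise. The $rr$-component gives $\eta'(r)$; the mixed $r\alpha$ components vanish because $\bar\Gamma^r_{r\alpha} = 0$; and the tangential components yield $\bar D_\alpha Y_\beta = -\bar\Gamma^r_{\alpha\beta}\eta = \eta\phi\phi'\,\tilde g_{\alpha\beta}$. Symmetrizing,
\begin{equation*}
(\mathcal L_Y \bar g)_{ij} = \bar D_i Y_j + \bar D_j Y_i,
\end{equation*}
so $(\mathcal L_Y \bar g)_{rr} = 2\eta'$, $(\mathcal L_Y \bar g)_{r\alpha} = 0$, and $(\mathcal L_Y \bar g)_{\alpha\beta} = 2\eta\phi\phi'\,\tilde g_{\alpha\beta}$. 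Factoring out $2\eta'$ from the whole tensor yields precisely
\begin{equation*}
\mathcal L_Y \bar g = 2\eta'\!\left(dr^2 + \frac{\eta\phi'}{\eta'\phi}\phi^2\tilde g\right).
\end{equation*}

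Specializing $\eta = \phi$ gives $\eta\phi'/(\eta'\phi) = 1$, so the parenthesized tensor collapses to $\bar g$ and $\mathcal L_X \bar g = 2\phi'\bar g$, proving that $X$ is a conformal Killing field. For the final identity $\bar D_i X_j = \phi'\bar g_{ij}$, the same componentwise computation with $\eta=\phi$ gives $\bar D_r X_r = \phi'$, $\bar D_r X_\alpha = \bar D_\alpha X_r = 0$, and $\bar D_\alpha X_\beta = \phi\phi'\tilde g_{\alpha\beta} = \phi'\bar g_{\alpha\beta}$, which already shows $\bar D_i X_j$ is symmetric and equal to $\phi'\bar g_{ij}$ (this is in fact equivalent to the conformal Killing equation, since the conformal factor combined with symmetry forces the traceless part to vanish).

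There is no real obstacle here; the only thing to be careful about is the bookkeeping of indices and making sure the factor of $\phi^2$ in $\bar g_{\alpha\beta}$ versus $\tilde g_{\alpha\beta}$ is tracked correctly when computing and then re-expressing $\bar D_\alpha Y_\beta$. Once the Christoffel symbols of the warped product are in hand, everything else is a two-line substitution.
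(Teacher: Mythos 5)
Your proof is correct, but it follows a genuinely different route from the paper. The paper computes $\mathcal L_Y \bar g$ directly at the level of differential forms, using $\mathcal L_Y dr = \eta'\,dr$, $\mathcal L_Y(dr\otimes dr) = 2\eta'\,dr\otimes dr$, and $\mathcal L_Y(\phi^2\tilde g) = 2\phi\phi'\eta\,\tilde g$, then sums. You instead compute the warped-product Christoffel symbols, lower the index on $Y$, work out $\bar D_i Y_j$ componentwise, and symmetrize via $(\mathcal L_Y\bar g)_{ij} = \bar D_i Y_j + \bar D_j Y_i$. Both are standard, and the conclusions agree. One small advantage of your approach: the paper's statement also asserts $\bar D_i X_j = \phi'\bar g_{ij}$, which your calculation proves directly (you have the components of $\bar D_i X_j$ in hand before symmetrizing), whereas the paper says this ``follows immediately'' from the conformal Killing property without spelling it out. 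The form-based approach is slightly cleaner algebraically and avoids Christoffel symbols, but does not by itself produce the unsymmetrized $\bar D_i X_j$.

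One minor typo to fix: in your final paragraph you write $\bar D_\alpha X_\beta = \phi\phi'\tilde g_{\alpha\beta}$, but substituting $\eta=\phi$ into your own formula $\bar D_\alpha Y_\beta = \eta\phi\phi'\tilde g_{\alpha\beta}$ gives $\phi^2\phi'\tilde g_{\alpha\beta}$, which is what actually equals $\phi'\bar g_{\alpha\beta}$. The conclusion is right; just restore the dropped factor of $\phi$.
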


\begin{proof}
  Recall the Lie derivatives for differential forms are
  \[
    \begin{array}[]{rll}
      \mathcal L_Y dy^{\beta} = & \frac{\P \eta^\beta}{\P y^{\alpha}}dy^{\alpha}\\
      \mathcal L_Y f = & Y(f).
    \end{array}
    \]
Thus,
  \[
    \begin{array}[]{rll}
      \mathcal L_Y dr = & \eta'(r)dr\\
      \mathcal L_Y dr\otimes dr = & 2\eta'(r)dr\otimes dr.\\
    \end{array}
    \]
    and
  \[
    \begin{array}[]{rll}
      \mathcal L_Y \phi^2\tilde g = &2\phi\phi'\eta\tilde g\\
      \mathcal L_Y \bar g = & 2\eta'(r)dr\otimes dr+2\phi\phi'\eta\tilde g \\
      =& 2\eta'(r)(dr\otimes dr +\frac{\phi'\eta}{\phi\eta'}\phi^2\tilde g).
    \end{array}
    \]
    Let $\eta=\phi$, then $Y=X$. The second part of the lemma follows immediately.
\end{proof}

By direct computations, see for example \cite{Be} and \cite{Br}, we have the Ricci tensor with respect to the metric $\bar g$.
\begin{lemm}
    \label{Ricci lemma}
  The Ricci curvature tensor of $(\mathbf N^{n+1},\bar g)$ is given by

  \begin{equation}
  \begin{array}[]{rll}
    \bar Ric=& -n\frac{\phi''}{\phi}dr^2-[(n-1)\phi'^2)+\phi\phi'']\tilde g+\tilde Ric.\\
  \end{array}
  \label{}
\end{equation}
In particular, for any $K\in \mathbb R$,
\begin{equation}
  \begin{array}[]{rll}
    \bar Ric=& -n\frac{\phi''}{\phi}dr^2+[(n-1)(K-\phi'^2)-\phi\phi'']\tilde g+\tilde Ric-(n-1)K\tilde g.\\
  \end{array}
  \label{}
\end{equation}
\end{lemm}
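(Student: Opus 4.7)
The plan is to carry out the standard warped-product computation of the curvature tensor and then contract; in our setting (one-dimensional base, $n$-dimensional fiber) this reduces to a short direct calculation, and one could alternatively invoke O'Neill's warped-product formulas as in \cite{Be} or \cite{Br}.

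First I would fix local coordinates $(r, x^\alpha)$ with $(x^\alpha)_{\alpha=1}^n$ coordinates on $\mathbf B^n$ and record the Christoffel symbols of $\bar g = dr^2 + \phi^2\tilde g$. A direct computation from the standard formula produces three blocks of nonzero symbols,
\[
  \bar\Gamma^r_{\alpha\beta} = -\phi\phi'\tilde g_{\alpha\beta}, \qquad \bar\Gamma^\alpha_{r\beta} = \frac{\phi'}{\phi}\delta^\alpha_\beta, \qquad \bar\Gamma^\alpha_{\beta\gamma} = \tilde\Gamma^\alpha_{\beta\gamma},
\]
while $\bar\Gamma^r_{rr}$, $\bar\Gamma^r_{r\alpha}$ and $\bar\Gamma^\alpha_{rr}$ all vanish, reflecting that $\partial_r$ is unit and perpendicular to the fibers.

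Next I would compute the Riemann tensor on each of the three blocks. The radial-tangential mixed term collapses after cancellation of the $(\phi')^2$ contributions to $R(\partial_r, \partial_\alpha)\partial_r = (\phi''/\phi)\partial_\alpha$; the purely tangential block satisfies a Gauss-type identity relating $R$ on the fiber to $\tilde R$ minus a shape contribution involving $(\phi')^2$, coming from the fact that each slice $\{r = \text{const}\}$ is totally umbilic with principal curvature $\phi'/\phi$ measured in $\bar g$; and the fully radial block is zero by antisymmetry. Contracting against $\bar g^{ij}$, whose fiber block is $\phi^{-2}\tilde g^{\alpha\beta}$, and using the antisymmetry of $R$ in its last two slots, yields
\[
  \bar{Ric}(\partial_r, \partial_r) = -n\frac{\phi''}{\phi}, \qquad \bar{Ric}(\partial_r, \partial_\alpha) = 0, \qquad \bar{Ric}_{\alpha\beta} = \tilde{Ric}_{\alpha\beta} - \bigl[(n-1)(\phi')^2 + \phi\phi''\bigr]\tilde g_{\alpha\beta},
\]
which assembles into the first displayed identity.

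The second identity is then a one-line algebraic rewriting: adding and subtracting $(n-1)K\tilde g$ inside the tangential bracket produces the combination $(n-1)(K - (\phi')^2) - \phi\phi''$ on the fiber, at the cost of an explicit $-(n-1)K\tilde g$ summand, chosen so that the curvature hypotheses \eqref{condition} can be read off sign by sign in the estimates of later sections. No substantive obstacle is expected; the only place requiring care is the consistent use of $\bar g$ versus $\tilde g$ when raising fiber indices, since the two differ by the factor $\phi^2$.
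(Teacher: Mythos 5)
Your computation is correct and is exactly the standard warped-product Ricci formula that the paper delegates to \cite{Be} and \cite{Br} (``By direct computations, see for example \ldots''), so you are filling in the omitted details rather than taking a different route. The Christoffel symbols, the cancellation in $\bar R(\partial_r,\partial_\alpha)\partial_r = (\phi''/\phi)\partial_\alpha$, the vanishing of $\bar{Ric}(\partial_r,\partial_\alpha)$ (from Codazzi, since the slices have parallel umbilic second fundamental form), the Gauss-type identity on the fiber block, and the final contraction with the fiber inverse metric $\phi^{-2}\tilde g^{\alpha\beta}$ all check out, and the second displayed identity is indeed just the add-and-subtract of $(n-1)K\tilde g$.
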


\medskip

Let $M^n\subset \mathbf N^{n+1}$ be a smooth hypersurface in the warped product space. Under local coordinates on $M^{n}$, denote by $g_{ij}$,  $h_{ij}$,  $h^i_j= g^{ik} h_{kj}$, and $H=h^i_i$, the induced metric,  the second fundamental form, the Weingarten tensor, and the mean curvature respectively for $i,j=1,\cdots,n$. We also let $\sigma_l$ denote the $l$-th elementary symmetric functions of the principal curvatures, i.e., the eigenvalues of the Weingarten tensor for $1\le l\le n$.

We need the following lemma.

\begin{lemm}
  \label{cofactor lemma}
  Let $\sigma_2^{ij}=\frac{\P\sigma_2}{\P h_{ij}}=H g^{ij}-h^{ij}$ be the cofactor tensor. Then the trace of its covariant derivative is
  \begin{equation}
  \sigma_2^{ij}(h)_j=-\bar R_{i\nu},
    \label{ric}
  \end{equation}
  where $\nu$ is the unit outward normal of the hypersurface.
\end{lemm}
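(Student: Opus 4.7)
The plan is to recognize this identity as a contracted Codazzi equation. First I would expand the left-hand side using $\sigma_2^{ij}=Hg^{ij}-h^{ij}$,
\begin{equation*}
\nabla_j\sigma_2^{ij}=g^{ij}\nabla_jH-\nabla_jh^{ij}=\nabla^iH-\nabla_jh^{ij},
\end{equation*}
so that the claim reduces to expressing $\nabla_jh^{ij}$ in terms of $\nabla^iH$ and an ambient curvature term contracted with the outward normal $\nu$.

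The second step is to write down the Codazzi equation for the hypersurface $M^n \hookrightarrow (\mathbf N^{n+1},\bar g)$ in the form
\begin{equation*}
\nabla_kh_{ij}-\nabla_jh_{ik}=\bar R_{\nu ijk},
\end{equation*}
where $\bar R_{\nu ijk}$ denotes the ambient Riemann tensor with one slot filled by $\nu$. Tracing this identity against $g^{jk}$ converts the two left-hand terms into $\nabla^jh_{ij}$ and $\nabla_iH$, while the right-hand side collapses, by the definition of Ricci curvature used earlier in the paper, to the ambient Ricci component $\bar R_{i\nu}$. Substituting the resulting contracted Codazzi relation back into the expansion above cancels the two $\nabla^iH$ contributions and leaves precisely $-\bar R_{i\nu}$.

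There is no real obstacle — the entire argument is one contraction — and the only subtlety is bookkeeping. One must make sure that the sign conventions for $\bar R$, for the outward unit normal $\nu$, and for the second fundamental form $h_{ij}$ (the same ones used in Lemma \ref{Ricci lemma}) are tracked consistently so that the minus sign on the right emerges. Once the conventions are fixed, the identity is immediate, and it admits the clean geometric interpretation that the Newton tensor $\sigma_2^{ij}$ is divergence-free precisely when the ambient Ricci curvature vanishes in the normal direction — an obstruction that will drive several of the monotonicity computations to follow.
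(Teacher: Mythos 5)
Your overall strategy — expand the divergence of $\sigma_2^{ij}$ and feed in the contracted Codazzi equation — is exactly the paper's argument, so the route is the same. But the specific trace you describe does not work. With the Codazzi equation written as
\begin{equation*}
\nabla_k h_{ij}-\nabla_j h_{ik}=\bar R_{\nu ijk},
\end{equation*}
tracing with $g^{jk}$ yields $0=0$: on the left, both $g^{jk}\nabla_k h_{ij}$ and $g^{jk}\nabla_j h_{ik}$ equal $\nabla^j h_{ij}$ (by the symmetry of $h$ and metric-compatibility of $\nabla$), so their difference vanishes and in particular neither becomes $\nabla_i H$; on the right, $g^{jk}\bar R_{\nu ijk}=0$ because $\bar R$ is antisymmetric in its last two slots. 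So the contraction you wrote produces no information.

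The fix is to make sure the free index $i$ occupies a derivative slot before tracing. The paper's form of Codazzi is $\nabla_j h_{il}-\nabla_i h_{jl}=\bar R_{ji\nu l}$, and tracing $g^{jl}$ gives $\nabla^j h_{ij}-\nabla_i H=\bar R_{i\nu}$, hence $\nabla_j\sigma_2^{ij}=\nabla^i H-\nabla_j h^{ij}=-\bar R_{i\nu}$. Equivalently, if you prefer your index arrangement, trace against $g^{ik}$ rather than $g^{jk}$: then $g^{ik}\nabla_k h_{ij}-g^{ik}\nabla_j h_{ik}=\nabla^i h_{ij}-\nabla_j H$, and the right-hand side becomes a genuine Ricci contraction $\pm\bar R_{j\nu}$. (You should also double-check the placement of $\nu$ in $\bar R_{\nu ijk}$ against the paper's convention $\bar R_{ji\nu l}=\langle\bar R(\partial_j,\partial_i)\nu,\partial_l\rangle$, since with that convention your right-hand side comes out as $-\bar R_{\nu ijk}$.) The rest of your reduction is fine, and your "it's just bookkeeping" diagnosis is accurate — but as written the bookkeeping is not yet done.
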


\begin{proof}
    In this proof, we will not use Einstein convention temporarily. For convenience, we use orthonormal coordinates and do not distinguish upper and lower indexes. By definition of $\sigma_l$, see e.g. \cite{R}, we have
  \begin{equation}
    \begin{array}[]{rll}
      \sum_{j=1}^n \sigma_2^{ij}(h)_j=& \sigma_2^{ii}(h)_i+\sum_{j\neq i}\sigma_2^{ij}(h)_j\\
      =&\sum_{l=1}^n h_{ll,i}-h_{ii,i}-\sum_{j\neq i}h_{ji,j}\\
    \end{array}
    \label{}
  \end{equation}
By  the Codazzi equation, we obtain
\begin{equation}
  h_{il,j}-h_{jl,i}= \langle \bar R(\P_j,\P_i)\nu,\P_l\rangle =\bar R_{ji\nu l},
  \label{}
\end{equation}
and
\begin{equation}
 \sum_{l\neq i}[ h_{il,l}-h_{ll,i}]=\bar R_{i\nu }.
  \label{Ricci iden}
\end{equation}

Thus,
  \begin{equation}
    \begin{array}[]{rll}
      \sum_{j=1}^n \sigma_2^{ij}(h)_j=&-\bar R_{i\nu}    \end{array}
    \label{}
  \end{equation}

\end{proof}

The following lemma is well-known, for example, can be found in \cite{Br} which follows from Lemma \ref{hessian lemma} and the Gauss equation directly.
\begin{prop}
\label{prop hessian}
  Let $X=\phi(r)\P_r$ be the conformal vector field and $\Phi'(r)=\phi(r)$. Then on a hypersurface $M\subset \mathbf N^{n+1}$,
  \begin{equation}
  \begin{aligned}
      \Phi_{ij}& = \phi'(r)g_{ij}-u h_{ij}\\
       \Delta \Phi &= n\phi'(r)-Hu,
  \end{aligned}
  \end{equation}
  where $u=\langle X,\nu\rangle $, $\Phi_{ij}$ is the Hessian of the function $\Phi$,  $\Delta\Phi$ is the Laplacian of the function $\Phi$, both with respect to the induced metric $g$ on $M$.\end{prop}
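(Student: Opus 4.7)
The plan is to compute the ambient Hessian of $\Phi$ in $\mathbf N^{n+1}$ using the conformal Killing property of $X$, and then transfer it to the intrinsic Hessian on $M$ via the Gauss formula. The Laplacian identity will then follow by tracing with $g^{ij}$.

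First I would observe that since $\Phi'(r)=\phi(r)$, the ambient gradient is $\bar\nabla\Phi=\phi(r)\partial_r = X$. Next, applying Lemma \ref{hessian lemma}, which gives $D_iX_j=\phi'(r)\bar g_{ij}$, the full ambient Hessian of $\Phi$ is
\begin{equation*}
\bar\Phi_{ij} = D_i(\bar\nabla\Phi)_j = D_iX_j = \phi'(r)\bar g_{ij}.
\end{equation*}
The third step is the Gauss formula. With the sign convention of the paper (so that a round level set of $r$ with outward normal $\nu=\partial_r$ has $h_{ij}=(\phi'/\phi)g_{ij}>0$), one has $\bar\nabla_{e_i}e_j=\nabla_{e_i}e_j-h_{ij}\nu$ for tangent frames $\{e_i\}$ on $M$. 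Comparing the intrinsic and ambient Hessians of the scalar function $\Phi$ restricted to $M$,
\begin{equation*}
\Phi_{ij}-\bar\Phi_{ij} = (\bar\nabla_{e_i}e_j - \nabla_{e_i}e_j)(\Phi) = -h_{ij}\,\nu(\Phi),
\end{equation*}
and since $\nu(\Phi)=\langle\bar\nabla\Phi,\nu\rangle=\langle X,\nu\rangle=u$, substituting the ambient Hessian yields $\Phi_{ij}=\phi'(r)g_{ij}-u\,h_{ij}$, which is the first identity.

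For the Laplacian, I would simply contract with $g^{ij}$: since $g^{ij}g_{ij}=n$ and $g^{ij}h_{ij}=H$, one gets $\Delta\Phi=n\phi'(r)-Hu$. The calculation is essentially a direct combination of Lemma \ref{hessian lemma} and the Gauss equation, with no real analytic difficulty; the only potential pitfall is consistency of the sign convention of the second fundamental form, which I would pin down by checking the round-slice case (where $\Phi$ is constant on $\{r=\text{const}\}$ and both sides must vanish), forcing the $-u h_{ij}$ sign.
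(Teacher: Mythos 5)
Your proof is correct and is exactly the argument the paper alludes to: the paper explicitly states that Proposition \ref{prop hessian} "follows from Lemma \ref{hessian lemma} and the Gauss equation directly," which is precisely your route of computing $\bar\Phi_{ij}=\phi'\bar g_{ij}$ from $D_iX_j=\phi'\bar g_{ij}$ and then subtracting the normal component $h_{ij}\nu(\Phi)=h_{ij}u$ via the Gauss formula. Your sanity check of the sign against the round slice $r=\mathrm{const}$ (where $h_{ij}=(\phi'/\phi)g_{ij}$ by \eqref{tensors:rho}) is a good way to pin down the convention and is consistent with the Weingarten relation $\bar\nabla_{e_i}\nu=h_{ij}e_j$ the paper uses later.
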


Now we derive a Minkowski identity.
\begin{lemm}
  \label{lemma Minkowski identity}
    Let $X=\phi(r)\P_r$ be the conformal vector field and $\Phi'(r)=\phi(r)$. Then on a hypersurface $M\subset\mathbf  N^{n+1}$,
  \begin{equation}
   (n-1) \Dint_M\phi' \sigma_1d\mu = 2 \Dint_M\sigma_2 ud\mu+\Dint_M \bar R_{i\nu}\Phi_i,
    \label{}
  \end{equation}
  where $u=\langle X,\nu\rangle $.
\end{lemm}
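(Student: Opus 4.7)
The plan is to view the desired identity as arising by contracting the Hessian formula from Proposition \ref{prop hessian} against the cofactor tensor $\sigma_2^{ij}$ and then integrating by parts, using the divergence identity in Lemma \ref{cofactor lemma} to produce the ambient Ricci term.

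First I would start from the pointwise formula
\[
\Phi_{ij}=\phi'(r)g_{ij}-u\,h_{ij}
\]
and contract with $\sigma_2^{ij}=Hg^{ij}-h^{ij}=\sigma_1 g^{ij}-h^{ij}$. Using $g^{ij}g_{ij}=n$, $g^{ij}h_{ij}=h^{ij}g_{ij}=\sigma_1$, $h^{ij}h_{ij}=|h|^2$, and the identity $\sigma_1^2-|h|^2=2\sigma_2$, a short algebraic computation gives the clean pointwise relation
\[
\sigma_2^{ij}\Phi_{ij}=(n-1)\phi'\sigma_1-2u\sigma_2.
\]
This is the key step: the contraction miraculously produces exactly the two terms appearing on the left- and right-hand sides of the Minkowski identity.

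Next I would integrate this identity over the closed hypersurface $M$ and apply integration by parts to the left-hand side. Since $M$ has no boundary,
\[
\int_M \sigma_2^{ij}\Phi_{ij}\,d\mu=-\int_M \bigl(\nabla_j\sigma_2^{ij}\bigr)\Phi_i\,d\mu.
\]
By Lemma \ref{cofactor lemma} we have $\nabla_j\sigma_2^{ij}=-\bar R_{i\nu}$, so the right-hand side equals $\int_M \bar R_{i\nu}\Phi_i\,d\mu$. Combining with the pointwise identity and rearranging yields
\[
(n-1)\int_M \phi'\sigma_1\,d\mu=2\int_M \sigma_2\,u\,d\mu+\int_M \bar R_{i\nu}\Phi_i\,d\mu,
\]
which is precisely the claimed identity.

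There is no real obstacle here; the proof is essentially a bookkeeping exercise once one recognizes the correct contraction. The only subtlety worth double-checking is that the notation $\sigma_2^{ij}(h)_j$ in Lemma \ref{cofactor lemma} indeed denotes the tangential divergence $\nabla_j\sigma_2^{ij}$ of the cofactor tensor (rather than, say, $\sigma_2^{ij}\nabla_j h_{\cdot\cdot}$); the proof of that lemma via Codazzi makes this clear, so the integration by parts above is legitimate.
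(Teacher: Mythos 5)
Your proof is correct and follows essentially the same route as the paper's: contract the Hessian identity $\Phi_{ij}=\phi'g_{ij}-uh_{ij}$ with $\sigma_2^{ij}$, verify the pointwise relation $\sigma_2^{ij}\Phi_{ij}=(n-1)\phi'\sigma_1-2u\sigma_2$, then integrate by parts and invoke Lemma \ref{cofactor lemma} for $\nabla_j\sigma_2^{ij}=-\bar R_{i\nu}$. Your reading of the notation $\sigma_2^{ij}(h)_j$ as the divergence $\nabla_j\sigma_2^{ij}$ is also the intended one, so nothing is missing.
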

\begin{proof}
  Applying Proposition \ref{prop hessian} and  contracting the cofactor tensor $\sigma_2^{ij}$ with the hessian of $\Phi$, we have
  \begin{equation}
    \sigma_2^{ij}\Phi_{ij}=(n-1) \phi' \sigma_1 - 2\sigma_2 u.
    \label{sigma 2 lemma equ 1}
  \end{equation}
  Integrate equation (\ref{sigma 2 lemma equ 1}) over $M$ and after integration by parts, we have
  \begin{equation}
    \begin{array}[]{rll}
    & (n-1)\Dint_M \phi' \sigma_1d\mu - 2\Dint_M\sigma_2 ud\mu\\
     =&\Dint_M\sigma_2^{ij}\Phi_{ij}d\mu\\
    =&-\Dint_M\sigma_2^{ij}(h)_j\Phi_i\\
    =&\Dint_M \bar R_{i\nu}\Phi_i,
     \end{array}
    \label{}
  \end{equation}
  where the last inequality follows from \eqref{ric}.
\end{proof}

\medskip

Let $M(t)$ be a smooth family of closed hypersurfaces in $\mathbf N^{n+1}$. Let $F(\cdot,t)$ denote a point on $M(t)$. We consider the flow \eqref{eflow0} in $(\mathbf N^{n+1},\bar{g} )$ where $\bar{g}$ is given as in \eqref{warped metric}.

\begin{prop}\label{prop2.1} Under flow $\partial_t F = f\nu$ of closed hypersurfaces in a Riemannian
manifold, suppose $\Omega_t$ is the domain enclosed by the evolving
hypersurface $M(t)$ and a fixed hypersurface , we have the following evolution equations.

\begin{equation}\label{}
\begin{array}{rll}
\partial_t g_{ij} &=& 2fh_{ij}\\
\partial_t h_{ij} &=& -\nabla_i\nabla_j f + f (h^2)_{ij}-fR_{\nu ij \nu}\\
\partial_t h^j_i  &=& -g^{jk}\nabla_k\nabla_j f - g^{jk}f (h^2)_{ki}-fg^{jk}R_{\nu ik \nu}\\
\end{array}
\end{equation}

Moreover, we have
\begin{align*}
    A'(t)=& \Dint_M fHd\mu_g,\\
V'(t)=&\Dint_M fd\mu_g,
\end{align*} where $A(t)$ is the area of $M(t)$ and $V(t)$ is the volume of $\Omega_t$.
\end{prop}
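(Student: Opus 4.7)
The plan is to carry out direct computations using the commutativity $\partial_t F_i = \partial_i(f\nu) = f_i\nu + f\bar\nabla_i \nu$, the Gauss/Weingarten formulae on $M\subset N$, and the ambient Ricci identity. I would adopt the convention $h_{ij} = \langle \bar\nabla_i \nu, F_j\rangle = -\langle \nu, \bar\nabla_i F_j\rangle$, which is the one compatible with the stated formula $\partial_t g_{ij} = +2fh_{ij}$. Under these conventions, all four evolution equations and the area/volume formulae follow by essentially routine calculations; the only non-mechanical input is the curvature term in $\partial_t h_{ij}$.

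First I would differentiate $g_{ij} = \langle F_i, F_j\rangle$: the terms $\langle f_i\nu, F_j\rangle$ and $\langle F_i, f_j\nu\rangle$ vanish since $\nu\perp F_j$, leaving $f\langle \bar\nabla_i\nu, F_j\rangle + f\langle F_i, \bar\nabla_j\nu\rangle = 2fh_{ij}$. Next I would determine $\partial_t \nu$ from the constraints $|\nu|^2 = 1$ and $\langle \nu, F_i\rangle = 0$; differentiating the second and using $\partial_t F_i = f_i\nu + f\bar\nabla_i \nu$ forces $\langle \partial_t \nu, F_i\rangle = -f_i$, so $\partial_t \nu = -\nabla f$ as a tangential vector field on $M$.

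For the second fundamental form, I would write $h_{ij} = -\langle \nu, \bar\nabla_i F_j\rangle$ and differentiate. The $\partial_t\nu = -\nabla f$ contribution gives $\langle \nabla f, \bar\nabla_i F_j\rangle = \Gamma^k_{ij} f_k$ after projecting onto the tangent part via the Gauss formula $\bar\nabla_i F_j = \Gamma^k_{ij}F_k - h_{ij}\nu$. For the second contribution one computes $\bar\nabla_t \bar\nabla_i F_j = \bar\nabla_i \bar\nabla_t F_j + \bar R(\partial_t F, F_i) F_j$ via the Ricci identity in $N$. Expanding $\bar\nabla_i \bar\nabla_t F_j = \bar\nabla_i(f_j\nu + f\bar\nabla_j\nu)$ and using $\bar\nabla_j\nu = h_j^k F_k$ together with Weingarten produces the normal component $f_{ij} - f(h^2)_{ij}$, while the curvature bracket contributes $fR_{\nu i j \nu}$. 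Assembling and recognising $f_{ij} - \Gamma^k_{ij}f_k = \nabla_i\nabla_j f$ yields $\partial_t h_{ij} = -\nabla_i\nabla_j f + f(h^2)_{ij} - fR_{\nu i j \nu}$. The formula for $\partial_t h^j_i$ then follows by differentiating $h^j_i = g^{jk} h_{ki}$ with the product rule and using $\partial_t g^{jk} = -g^{jp}g^{kq}\partial_t g_{pq} = -2fh^{jk}$.

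Finally, $\partial_t d\mu_g = \tfrac12 g^{ij}(\partial_t g_{ij})\,d\mu_g = fH\,d\mu_g$ gives $A'(t) = \int_M fH\,d\mu_g$, and $V'(t) = \int_M f\,d\mu_g$ is the standard first variation of enclosed volume under a normal flow (verifiable by coarea or as a Reynolds transport identity applied to the characteristic function of $\Omega_t$). The main obstacle is the curvature bookkeeping in the $h_{ij}$ step: one must correctly invoke the ambient Ricci identity to isolate $R_{\nu i j\nu}$ and maintain consistent sign conventions (outward $\nu$, the sign in the definition of $h$, and its compatibility with $\partial_t g_{ij} = +2fh_{ij}$) throughout the computation.
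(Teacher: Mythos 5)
The paper states Proposition~\ref{prop2.1} without proof, taking these evolution equations as standard facts from the mean curvature flow literature (cf.~Huisken \cite{H1}), so there is no in-paper argument to compare against. Your derivation is the standard one and is correct: the computation of $\partial_t g_{ij}$ and $\partial_t \nu$ from the constraints, the use of the ambient commutation identity $\bar\nabla_t\bar\nabla_i F_j = \bar\nabla_i\bar\nabla_t F_j + \bar R(\partial_t F,F_i)F_j$ together with the Gauss and Weingarten formulas to extract the normal component for $\partial_t h_{ij}$, raising an index via $\partial_t g^{jk}=-2fh^{jk}$, and the first variation formulas for area and enclosed volume; the one thing to watch, as you note yourself, is keeping the second-fundamental-form sign $h_{ij}=\langle\bar\nabla_i\nu,F_j\rangle$ and the Riemann tensor convention $R_{abcd}=\langle\bar R(\partial_a,\partial_b)\partial_c,\partial_d\rangle$ consistent with the paper's usage (see the Codazzi step in Lemma~\ref{cofactor lemma}).
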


Using Proposition \ref{prop2.1}, we obtain the following monotonicity formulae.

\begin{theo}\label{thm mono hyperbolic}
Let $M(t)$ be a smooth one-parameter family of closed hypersurface
in $\mathbf N^{n+1}$ with $M(0)=\partial \Omega$ which solves the parabolic equations (\ref{eflow0}) on $[0,T)$. We assume $M(t)$ are graphical  hypersurfaces. If $K-\phi'^2+\phi\phi''\ge 0$ and $\tilde{Ric}\geq (n-1)K \tilde{g}$, then the enclosed volume is a constant and surface area is non-increasing along the flow.
\end{theo}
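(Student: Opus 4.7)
My plan is to derive $V'(t)=0$ and $A'(t)\le 0$ by combining Proposition \ref{prop2.1} with the three identities already established: the Hessian formula in Proposition \ref{prop hessian}, the Minkowski identity in Lemma \ref{lemma Minkowski identity}, and the Ricci decomposition in Lemma \ref{Ricci lemma}. The conformal Killing field $X=\phi(r)\P_r$ with primitive $\Phi$ (where $\Phi'(r)=\phi$) is the common thread.

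Volume preservation is immediate: the speed is $f=n\phi'-uH$, which by Proposition \ref{prop hessian} equals $\Delta\Phi$. Hence Proposition \ref{prop2.1} gives
\begin{equation*}
V'(t)=\int_M f\,d\mu_g=\int_M \Delta\Phi\,d\mu_g=0
\end{equation*}
by the divergence theorem on the closed hypersurface $M(t)$.

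For the area, Proposition \ref{prop2.1} gives $A'(t)=n\int_M \phi' H\,d\mu_g-\int_M u H^2\,d\mu_g$. I would apply Lemma \ref{lemma Minkowski identity} to replace $(n-1)\int_M \phi' H\,d\mu_g$ with $2\int_M \sigma_2 u\,d\mu_g+\int_M \bar R_{i\nu}\Phi^i\,d\mu_g$. Using $2\sigma_2=H^2-|h|^2$ and recombining, this yields
\begin{equation*}
A'(t)=-\frac{1}{n-1}\int_M u\bigl(n|h|^2-H^2\bigr)\,d\mu_g+\frac{n}{n-1}\int_M \bar R_{i\nu}\Phi^i\,d\mu_g.
\end{equation*}
The first term is $\le 0$ by Cauchy-Schwarz ($H^2\le n|h|^2$) together with the positivity $u=\phi\nu^r>0$ on graphs.

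The main obstacle is to show the Ricci term is nonpositive under the structural hypotheses. Since $\bar\n\Phi=X$, the tangential gradient on $M$ is $\n\Phi=X-u\nu$, so $\bar R_{i\nu}\Phi^i=\bar{Ric}(X,\nu)-u\bar{Ric}(\nu,\nu)$. Substituting the decomposition from Lemma \ref{Ricci lemma}, splitting $\nu=\nu^r\P_r+\nu^B$, and using $u=\phi\nu^r$ along with $(\nu^r)^2+\phi^2\tilde g(\nu^B,\nu^B)=1$, the $dr^2$ contributions should cancel and collapse the integrand to the pointwise identity
\begin{equation*}
\bar R_{i\nu}\Phi^i=(n-1)u\bigl[\phi'^2-\phi\phi''\bigr]\tilde g(\nu^B,\nu^B)-u\tilde{Ric}(\nu^B,\nu^B).
\end{equation*}
The two hypotheses $\phi'^2-\phi\phi''\le K$ and $\tilde{Ric}\ge (n-1)K\tilde g$ then cancel against each other pointwise, forcing $\bar R_{i\nu}\Phi^i\le 0$. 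Integrating and combining with the previous step yields $A'(t)\le 0$, completing the argument.
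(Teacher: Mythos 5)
Your proposal is correct and follows essentially the same route as the paper: $V'(t)=\int\Delta\Phi=0$, then the Minkowski identity from Lemma~\ref{lemma Minkowski identity} combined with Newton--MacLaurin/Cauchy--Schwarz to handle $A'(t)$, with the leftover Ricci term controlled by the structural hypotheses. The only cosmetic difference is that you derive the pointwise inequality $\bar R_{i\nu}\Phi^i\le 0$ intrinsically by decomposing $\nu=\nu^r\partial_r+\nu^B$, whereas the paper invokes its graphical-coordinate formula \eqref{Ric normal}; these are the same identity (one can check $u\tilde g(\nu^B,\nu^B)=|\tilde\nabla\rho|^2/\omega^3$ and $u\tilde{Ric}(\nu^B,\nu^B)=\tilde{Ric}(\tilde\nabla\rho,\tilde\nabla\rho)/\omega^3$), so there is no genuine difference in method.
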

\begin{proof}  The proof is a consequence of Proposition \ref{prop2.1} and the Minkowski identity from Lemma \ref{lemma Minkowski identity}.

  \begin{align}
         V'(t)=&\Dint (n\phi' - Hu)d\mu_g =0\nonumber\\
  \nonumber\\
 A'(t)=& \Dint (n\phi'-Hu)Hd\mu_g\nonumber\\
 =& \Dint (n\phi' H-\frac{2n}{n-1}\sigma_2u)d\mu_g+\Dint (\frac{2n}{n-1}\sigma_2-H^2)ud\mu_g\nonumber\\
 =& \Dint \bar n R_{i\nu}\n_i\Phi d\mu_g+\Dint (\frac{2n}{n-1}\sigma_2-H^2)ud\mu_g\nonumber\\
 \le & 0,
      \label{mono 2 hyperbolic}
  \end{align}
  where we have used Lemma \ref{lemma Minkowski identity}, (\ref{Ric normal}), and Newton-McLaurin inequality.
\end{proof}

Note that above proof fails $n=1$. This case is treated by different argument in \cite{Cant}.

\section{Graphical hypersurface and $C^1$ estimate}\label{c1est}
We now focus only on those hypersurfaces that are graphical.
Let  $M$ be the graph of a smooth and positive function $\rho$ on $\mathbf B^n$. Let $\P_1, \cdots, \P_n$ be a local frame along $M$ and $\P_\rho$ be the vector field along radial direction.  For simplicity, all the covariant derivatives  are with respect to the metric $\tilde g_{ij}$ and  denoted as $\tilde\nabla$ when there is no confusion in the context.

Denote \[\omega:=\sqrt{\phi^2+|\tilde\nabla\rho|^2},\] then the outward unit normal is $\nu= \frac{\phi}{\omega}(1,-\frac{\rho_1}{\phi^2},\cdots, -\frac{\rho_n}{\phi^2})$. The support function, induced metric, inverse metric matrix, second fundamental form can be expressed as follows.
\begin{equation}\label{tensors:rho}
    \begin{array}{rll}
    u=&\frac{\phi^2}{\omega}\\
    \rho^i=& \tilde g^{il}\rho_l\\
    g_{ij}=& \phi^2\tilde{g}_{ij}+\rho_i\rho_j,\quad g^{ij}=\frac{1}{\phi^2}(\tilde{g}^{ij}-\omega^{-2} \rho^i\rho^j)\\
    h_{ij}=&\omega^{-1}(-\phi\tilde\nabla_i\tilde\nabla_j\rho+2\phi'\rho_i\rho_j+\phi^2\phi'\tilde{g}_{ij})\\
    h^i_j=& \frac{1}{\phi^2 \omega}(\tilde{g}^{ik}-\omega^{-2} \rho^i\rho^k)(-\phi\tilde\nabla_k\tilde\nabla_j\rho+2\phi'\rho_k\rho_j+\phi^2\phi'\tilde{g}_{kj})
    \end{array}
\end{equation}
where all the covariant derivatives $\tilde\nabla$ and $\rho_i$ are w.r.t. the base metric $\tilde g_{ij}$. \\

For convenience, we let
\begin{equation}\label{tensors:rho 2}
    \begin{array}{rll}
      b_{ij}=& -\phi\o^2\rho_{ij}+\phi\rho_i(\frac{|\tilde\n\rho|^2}{2})_j+\phi'\phi^2\rho_i\rho_j+\phi'\phi^2\o^2\tilde{g}_{ij}\\
    h^i_j=& \frac{1}{\phi^2\omega^3}b^i_j\\
    \tilde H=& b^i_i=-\phi\o^2\tilde\Delta\rho+\phi\rho^i(\frac{|\tilde\n\rho|^2}{2})_i+\phi'\phi^2|\tilde\n\rho|^2+n\phi'\phi^2\o^2\\
    \end{array}
\end{equation}
Thus, $\tilde H=\phi^2\o^3 H$ and $H=  \frac{1}{\phi^2\o^3}\tilde H$

By direct computations, we have
\begin{lemm}
    Let $M^n\subset\mathbf  N^{n+1}$ be a graphical hypersurface, which is defined by a function $r=\rho(p)$, $p\in \mathbf B^n$. Then
\begin{align}
    \bar R_{\nu \nu}+n\frac{\phi''}{\phi}=&(n-1)\frac{K-\phi'^2+\phi\phi''}{\phi^2}\frac{|\tilde\n \rho|^2}{\o^2}+(\tilde R_{ij}-(n-1)K\tilde g_{ij})\frac{\rho_i\rho_j}{\phi^2\o^2},\nonumber\\
    g^{ij}\bar R_{\nu e_i}\n_j\Phi=&-(n-1)(K-\phi'^2+\phi\phi'')\frac{|\tilde\n\rho|^2}{\o^3} -(\tilde R_{ij}-(n-1)K\tilde g_{ij})\frac{\rho_i\rho_j}{\o^3},\nonumber\\
        \label{Ric normal}
    \end{align}
 where $ \nu = \frac{\phi}{\sqrt{\phi^2+|\tilde\n \rho|^2}}(\P_r-\frac{\rho_i}{\phi^2}\P_i)$ is the unit outward normal vector
and $ e_i=\rho_i\P_r+\P_i$ are the tangent vector fields.
\end{lemm}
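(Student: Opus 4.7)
The plan is to compute both quantities by direct substitution into the Ricci curvature formula of Lemma \ref{Ricci lemma}, using the explicit coordinate expressions for $\nu$ and the graph tangent vectors $e_i$ from \eqref{tensors:rho}. The key observation is that adding and subtracting the term $(n-1)K\tilde g$ to $\bar{Ric}$ (as in the second form of Lemma \ref{Ricci lemma}) does not change the quantity being computed, so it suffices to verify the unshifted identity; the shifted form in the statement then follows automatically.

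First I would fix coordinates so that $\nu = \frac{\phi}{\omega}\partial_r - \frac{\rho^k}{\phi\omega}\partial_k$, where $\rho^k=\tilde g^{kl}\rho_l$, and verify that this is indeed a unit vector by a short computation using $\bar g = dr^2+\phi^2\tilde g$. Applying Lemma \ref{Ricci lemma} to $\bar{Ric}(\nu,\nu)$ gives three contributions: the $dr^2$-piece yields $-n\frac{\phi''}{\phi}\cdot\frac{\phi^2}{\omega^2}$; the $\tilde g$-coefficient multiplied by $\tilde g(\nu,\nu) = \frac{|\tilde\nabla\rho|^2}{\phi^2\omega^2}$; and the $\tilde{Ric}$-piece evaluated at the tangential components of $\nu$ gives $\tilde R_{ij}\frac{\rho^i\rho^j}{\phi^2\omega^2}$. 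Using $\frac{\phi^2}{\omega^2}=1-\frac{|\tilde\nabla\rho|^2}{\omega^2}$ to move the constant $-n\phi''/\phi$ to the left side, the remaining algebra collapses the coefficient of $|\tilde\nabla\rho|^2/\omega^2$ to $(n-1)(\phi\phi''-\phi'^2)/\phi^2$, which, after adding and subtracting $(n-1)K/\phi^2$, produces the stated form.

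For the second identity, I would compute $\bar{Ric}(\nu,e_i)$ with $e_i=\rho_i\partial_r+\partial_i$ by the same contraction, obtaining
\begin{equation*}
\bar R_{\nu e_i} = -\frac{n\phi''\rho_i}{\omega} -\frac{\tilde R_{ki}\rho^k}{\phi\omega} + \frac{[(n-1)\phi'^2+\phi\phi'']\rho_i}{\phi\omega}.
\end{equation*}
Then $\nabla^i\Phi = g^{ij}\nabla_j\Phi$ is evaluated using $\nabla_j\Phi = e_j(\Phi)=\phi\rho_j$ (since $\Phi'=\phi$) and the formula $g^{ij}=\phi^{-2}(\tilde g^{ij}-\omega^{-2}\rho^i\rho^j)$ from \eqref{tensors:rho}; the resulting simplification collapses to $\nabla^i\Phi=\phi\rho^i/\omega^2$. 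Contracting and simplifying with $\rho_i\rho^i=|\tilde\nabla\rho|^2$ gives the coefficient $(n-1)(\phi'^2-\phi\phi'')|\tilde\nabla\rho|^2/\omega^3-\tilde R_{ij}\rho^i\rho^j/\omega^3$, which matches the claimed expression after inserting and cancelling the $(n-1)K\tilde g_{ij}\rho^i\rho^j/\omega^3=(n-1)K|\tilde\nabla\rho|^2/\omega^3$ terms.

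There is no conceptual obstacle here—the entire proof is essentially index bookkeeping once $\nu$, $e_i$, $\nabla^i\Phi$, and $g^{ij}$ are written in the coordinates of the base. The main pitfall to watch for is sign consistency in the $\nu^k$ components and careful tracking of the raising/lowering conventions between $\rho_i$ and $\rho^i$ with respect to $\tilde g$, since both notations appear in the statement; making one ubiquitous choice throughout the computation and converting at the end keeps the algebra clean.
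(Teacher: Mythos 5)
Your proposal is correct and follows exactly the paper's route: apply Lemma \ref{Ricci lemma} to $\bar R_{\nu\nu}$ and $\bar R_{\nu e_i}$ with the explicit coordinate formulas for $\nu$, $e_i$, $g^{ij}$, $\nabla_j\Phi$ from \eqref{tensors:rho}, then collect terms and insert $\pm(n-1)K\tilde g$. The intermediate identities you record (including $\nabla^i\Phi=\phi\rho^i/\omega^2$, the use of $1-\phi^2/\omega^2=|\tilde\nabla\rho|^2/\omega^2$, and the collapse of $-n\phi''\phi+(n-1)\phi'^2+\phi\phi''$ to $(n-1)(\phi'^2-\phi\phi'')$) all check out and match the paper's \eqref{Rnunu equ1}--\eqref{Rnunu equ2} up to trivial rearrangement.
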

\begin{proof}
    Using Lemma \ref{Ricci lemma}, we first compute
    \begin{align}
     \bar R_{\nu \nu}=&-(n-1)(K-\phi'^2+\phi\phi'')\frac{u^2}{\phi^4}+((n-1)(K-\phi'^2)-\phi\phi'')\frac{1}{\phi^2}\nonumber\\
     &+(\tilde R_{ij}-(n-1)K\tilde g_{ij})\frac{\rho_i\rho_j}{\phi^2\o^2}.
    \label{Rnunu equ1}
    \end{align}
The first identity of the lemma follows immediately after simplifications.

Using Lemma \ref{Ricci lemma} again, we have
    \begin{align}
        \bar R_{\nu e_i}=&-(n-1)\frac{K-\phi'^2+\phi\phi''}{\phi\o}\rho_i -(\tilde R_{ik}-(n-1)K\tilde g_{ik})\frac{\rho_k}{\phi\o}.
    \label{Rnunu equ2}
    \end{align}
    Combining (\ref{tensors:rho}) and (\ref{Rnunu equ2}), we finish the proof of the second identity.
\end{proof}

\medskip

We now consider the flow equation (\ref{eflow0}) of graphical hypersurfaces in $\mathbf N^{n+1}$. It is known that if a closed hypersurface is graphical and satisfies
\[
\P_t F= f\nu,
\]
then the evolution of the scalar function $\rho=\rho(F(z,t),t)$   satisfies\[
\P_t\rho = f\frac{\omega}{\phi}.
\]

Thus it suffices to consider the following parabolic initial value problem on $\mathbf B^n$,
\begin{equation}
  \left\{
  \begin{array}[]{rll}
    \P_t\rho =& (n\phi'-Hu)\frac{\omega}{\phi},  \rho=\rho(p, t)  \text{ for } (p,t)\in \mathbf B^n\times [0,\infty)\\
    \rho(\cdot, 0)=&\rho_0,
  \end{array}
  \right.
  \label{ivp}
\end{equation}
where $\rho_0$ is the radial function of the initial hypersurface.

We next show that the radial function $\rho$ is uniformly bounded from above and below.
\begin{prop}
  \label{rho bounded}
  Let $M_0$ be a graphical hypersurface defined by function $\rho_0$ in $\mathbf N^{n+1}$. If $\rho(p,t)$ solves the initial value problem \eqref{ivp}, then for any $(p,t)\in \mathbf B^n\times [0, T)$,
\[\min_{p\in \mathbf B} \rho(x,0)\le \rho(p, t)\le \max_{p\in \mathbf B} \rho(p,0).\]
\end{prop}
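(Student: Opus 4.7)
The plan is to prove the $C^0$ bound by applying a parabolic maximum principle directly to the scalar equation \eqref{ivp}, without invoking any of the curvature conditions \eqref{condition}. I would show separately that $\rho_{\max}(t) := \max_{p \in \mathbf{B}^n}\rho(p,t)$ is non-increasing in $t$ and that $\rho_{\min}(t) := \min_{p \in \mathbf{B}^n}\rho(p,t)$ is non-decreasing; the two arguments are mirror images, so I would present only the first in detail.

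The core step is a pointwise identity at a spatial maximum. At any point $p_0$ where $\rho(\cdot,t_0)$ attains its maximum, the first- and second-order conditions $\tilde\nabla \rho(p_0,t_0) = 0$ and $\tilde\nabla^2 \rho(p_0,t_0) \le 0$ must hold. I would substitute $\tilde\nabla\rho = 0$ into the explicit formulas in \eqref{tensors:rho}: this collapses $\omega$ to $\phi$, makes $u$ equal to $\phi$, reduces $g_{ij}$ to $\phi^2\tilde g_{ij}$, and turns the second fundamental form into $h_{ij} = -\tilde\nabla_i\tilde\nabla_j\rho + \phi\phi'\tilde g_{ij}$. A one-line contraction with $g^{ij} = \phi^{-2}\tilde g^{ij}$ then yields $Hu = n\phi' - \phi^{-1}\tilde\Delta\rho$, so that the right-hand side of \eqref{ivp} at $(p_0,t_0)$ equals $\phi^{-1}\tilde\Delta\rho$, which is $\le 0$ because $\phi > 0$ and $\tilde\Delta\rho \le 0$ at a spatial maximum.

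Once this pointwise inequality is in hand, the monotonicity of $\rho_{\max}(t)$ follows from the standard parabolic maximum principle. I do not expect any serious obstacle. The only mild technicality is that $\rho_{\max}(t)$ need not be differentiable in $t$; this is handled either by Hamilton's trick for the evolution of extrema, or by the standard perturbation device of considering $\rho - \varepsilon t$ and letting $\varepsilon \to 0^+$. I would also flag that the argument uses nothing beyond positivity of $\phi$, which is consistent with the fact that the restrictive hypotheses \eqref{condition} are reserved for the subsequent $C^1$ and higher-order estimates rather than for the $C^0$ bound proved here.
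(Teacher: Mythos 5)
Your argument is correct and is essentially the paper's own proof: you evaluate the right side of \eqref{ivp} at a spatial critical point, use $\tilde\nabla\rho=0$, $\omega=\phi$, $u=\phi$ to reduce it to $\phi^{-1}\tilde\Delta\rho$, and conclude by the maximum principle. The only cosmetic difference is that you contract $h_{ij}$ with $g^{ij}$ directly whereas the paper computes via the auxiliary quantity $\tilde H = b^i_i$; the intermediate identity $\rho_t = \frac{1}{\phi}\tilde\Delta\rho$ at critical points is identical, and your remark about Hamilton's trick just makes explicit a technicality the paper leaves to the reader.
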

\begin{proof}
  At critical points of $\rho$, the following conditions hold,
 \[\tilde\nabla \rho=0,  \omega =\phi.\] It follows from (\ref{tensors:rho}) that, at critical points of $\rho$, $\tilde H=-\phi^3\tilde\Delta \rho+n\phi'\phi^4$. Together with (\ref{ivp}), at critical points,
 \[
   \rho_t=\frac{1}{\phi}\tilde\Delta \rho.
   \]
By the standard maximum principle, this proves the uniform upper and lower bounds for $\rho$.
\end{proof}

We now consider gradient estimate. Throughout the rest of this section, the covariant derivatives will be with respect to the metric $\tilde g$ on $\mathbf B^n$.

\begin{theo}\label{thmgradient}
  ({\bf Gradient estimate and exponential convergence.})  Let $\rho(\cdot, t)$ be a solution to the flow (\ref{eflow0}) on $[0,T]$. If $ (\phi')^2-\phi''\phi\ge 0$, then
\begin{equation}\label{}
    \di\max_{M(t)}e^{\alpha t}|\tilde\nabla \rho|^2\le \di\max_{M(0)}|\tilde\nabla \rho|^2,
\end{equation}
for some $\alpha>0$ which is independent of $t$.
\end{theo}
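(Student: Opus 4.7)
The plan is to recast the flow \eqref{eflow0} as a quasilinear parabolic PDE $\partial_t\rho = G[\rho]$ for the radial function $\rho$ on $\mathbf B^n$ and apply the parabolic maximum principle to $W := |\tilde\nabla\rho|^2$. Using \eqref{ivp} together with \eqref{tensors:rho}--\eqref{tensors:rho 2}, a direct computation gives
\[
G \;=\; \frac{n\phi'W}{\phi\omega}\;+\;\frac{\tilde\Delta\rho}{\omega}\;-\;\frac{\rho^i\rho^j\rho_{ij}}{\omega^3}\;-\;\frac{\phi'\phi W}{\omega^3},
\]
whose principal symbol $\partial G/\partial\rho_{jk}=\tilde g^{jk}/\omega-\rho^j\rho^k/\omega^3$ is positive definite. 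By the $C^0$ bound of Proposition~\ref{rho bounded}, $\phi,\phi',\phi''$ stay uniformly bounded along the flow.

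I would differentiate $W$ in time, using the identities $W_j=2\rho^i\rho_{ij}$ and $W_{jk}=2\rho^i_k\rho_{ij}+2\rho^i\rho_{ijk}$, to obtain
\[
\partial_t W \;=\; 2W\,G_\rho \;+\; G_{\rho_j}W_j \;+\; G_{\rho_{jk}}W_{jk} \;-\; 2G_{\rho_{jk}}\rho^i_k\rho_{ij}.
\]
At an interior spatial maximum of $W(\cdot,t)$, I would work in local coordinates where $\tilde g$ is the identity and $\tilde\nabla\rho=\rho_1e_1$ at the max point; the condition $W_j=0$ then forces $\rho_{1j}=0$ for all $j$, so that $\rho^i\rho^j\rho_{ij}=0$. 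Parabolicity together with $W_{jk}\leq 0$ ensures $G_{\rho_{jk}}W_{jk}\leq 0$, and the last term evaluates to $-(2/\omega)\sum_{i,j\geq 2}\rho_{ij}^2\leq 0$.

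The heart of the argument is to show that $2WG_\rho$ carries a matching negative sign. Using $\partial_\rho\omega=\phi\phi'/\omega$ and the simplifications above, an explicit computation gives a $W$-coefficient in $G_\rho$ proportional to $(n-1)\phi''\phi-(2n+1)(\phi')^2$, which under the hypothesis $(\phi')^2-\phi''\phi\geq 0$ is bounded above by $-(n+2)(\phi')^2\leq 0$, and a higher-order $W^2$-coefficient proportional to $\phi''\phi-(\phi')^2\leq 0$. The only indefinite term is the cross term $-\phi\phi'\tilde\Delta\rho/\omega^3$; it is controlled by the Hessian remainder via Young's inequality together with the bound $(\tilde\Delta\rho)^2\leq (n-1)\sum_{i,j\geq 2}\rho_{ij}^2$, valid because $\rho_{11}=0$ at the max point. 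After all cancellations one arrives at $\partial_t W|_{\max}\leq -\alpha W|_{\max}$ for some $\alpha>0$ depending only on the uniform bounds and the dimension; the parabolic maximum principle applied to $e^{\alpha t}W$ then yields the claim.

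The main obstacle is the precise sign analysis around the indefinite cross term: the Young absorption must be tight enough that the positive residue (of order $W^2/\omega^5$) is strictly dominated by the negative contribution (of order $(\phi')^2W/\omega^3$) extracted from the hypothesis, which essentially requires the dimensional bound $n\geq 2$ assumed in Theorem~\ref{main thm}. A further subtlety is that this scheme gives a clean linear-in-$W$ decay only where $\phi'$ is bounded away from zero; at points where $\phi'$ may vanish, the sharpest rate is recovered by reformulating via the support function $u=\phi^2/\omega$ whose evolution is developed in Section~\ref{uH}, and using the identity $|\tilde\nabla\rho|^2=\phi^4/u^2-\phi^2$ to convert a uniform exponential approach $u\to\phi$ into the gradient estimate.
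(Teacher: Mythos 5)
Your high-level strategy is the same as the paper's: convert the flow to the scalar quasilinear PDE \eqref{ivp} for $\rho$, compute the evolution of $|\tilde\nabla\rho|^2$ under the associated parabolic operator, evaluate at a spatial maximum, rotate coordinates so that $\tilde\nabla\rho$ points along $e_1$ and $\rho_{1j}=0$, and absorb the indefinite cross term $-\phi\phi'\tilde\Delta\rho/\omega^3$ via a completed square against $\sum_{j\ge 2}\rho_{jj}^2$. Up to this point the two arguments agree in structure, and the sign analysis of the $(\phi')^2$ and $\phi''\phi-(\phi')^2$ coefficients under the hypothesis $(\phi')^2-\phi''\phi\ge 0$ is carried out in essentially the same way in the paper's equation \eqref{gradient evolution}.

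The genuine gap is in the source of the exponential rate $\alpha>0$. You try to extract it from the term proportional to $-(n+2)(\phi')^2|\tilde\nabla\rho|^2/\omega^3$, and you correctly observe this fails at any $r$ where $\phi'=0$. Your proposed fix, passing through the support function $u=\phi^2/\omega$ of Section~\ref{uH}, does not close the gap: Proposition~\ref{gradient by u} only gives a uniform \emph{lower bound} $u\ge C>0$ under the \emph{strict} hypothesis $(\phi')^2-\phi''\phi>0$ (which is not assumed here), not an exponential decay $u\to\phi$, and the computation in Section~\ref{uH} itself presupposes a priori gradient control and is not part of the proof of this theorem. The paper instead obtains the rate from the ambient hypothesis $\tilde{Ric}\ge (n-1)K\tilde g$ with $K>0$ (the standing assumption \eqref{condition} of Theorem~\ref{main thm}): after the completed-square estimate one has
\[
\mathcal L\!\left(\tfrac{|\tilde\nabla\rho|^2}{2}\right)\le -\tfrac{1}{\omega}\,\tilde{Ric}(\tilde\nabla\rho,\tilde\nabla\rho)\le -\alpha\,|\tilde\nabla\rho|^2,
\]
with $\alpha>0$ determined by $K$ and the $C^0$ and gradient bounds, independent of $\phi'$. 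Your write-up omits the Ricci term entirely, and this is precisely the term that delivers the claimed exponential decay uniformly; without it the argument only gives $\mathcal L(\frac{|\tilde\nabla\rho|^2}{2})\le 0$, i.e.\ a gradient bound but not the stated rate. (The borderline case $K=0$, where the Ricci term degenerates, is treated separately in the last section of the paper by a different comparison argument and gives only polynomial or $L^2$ convergence, confirming that the rate in the present theorem really does come from $K>0$.)
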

\begin{proof}
 Recall the evolution of $\P_t\rho$,
\begin{equation}
  \P_t \rho=n\frac{\phi'}{\phi}\o-\frac{1}{\phi\o^3}\tilde H,
  \label{}
\end{equation}
where $\tilde H$ was defined as in (\ref{tensors:rho 2}). We derive the evolution of $\frac{|\tilde\nabla \rho|^2}{2}$ below. Throughout the proof, we will work at a maximum point of the test funciton $\frac{|\tilde\nabla \rho|^2}{2}$, so that the following critical point conditions will hold,
\begin{equation}
  \tilde\n\o^2=\tilde\n\phi^2, \mathrm{or\quad } \tilde\n\o=\frac{\phi\phi'}{\o}\tilde\n\rho.
  \label{}
\end{equation}

First we have, at critical points of the test function,

\[
  \begin{array}[]{rll}
    \tilde\nabla \rho\tilde\nabla \tilde H =& \tilde\n \rho \tilde\n \Big[-\phi\o^2\tilde\Delta\rho+\phi\rho^i(\frac{|\tilde\n\rho|^2}{2})_i+\phi'\phi^2|\tilde\n\rho|^2+n\phi'\phi^2\o^2\Big]\\
\\
=& -\phi\o^2\tilde\n\rho\tilde\n\tilde\Delta\rho +\phi\rho_i\rho^k(\frac{|\tilde\n\rho|^2}{2})_{ik}\\
& -\rho^k(\phi\o^2)_k\tilde\Delta\rho+\rho^k(\phi'\phi^2)_k|\tilde\n\rho|^2+n\rho^k(\phi'\phi^2)_k\o^2+n\phi'\phi^2\rho^k(\phi^2)_k\\
  \end{array}
  \]

Note that
\[
  \begin{array}[]{rll}
      -\phi\o^2\tilde\n\rho\tilde\n\tilde\Delta\rho =& -\phi\o^2\rho^k(\tilde\n_i\rho_{ik}-\tilde R_{ik}\rho_i)\\
    =&-\phi\o^2\tilde\Delta \frac{|\tilde\n\rho|^2}{2} + \phi\o^2|\rho_{ij}|^2+\phi\o^2\tilde Ric(\tilde \n\rho,\tilde \n\rho).
  \end{array}
  \]

Thus
\begin{equation}
  \begin{array}[]{rll}
    \tilde\nabla \rho\tilde\nabla \tilde H =& -\phi(\o^2 \tilde g^{ik}-\rho^i\rho^k)(\frac{|\tilde\n\rho|^2}{2})_{ik}+ \phi\o^2|\rho_{ij}|^2+\phi\o^2\tilde Ric(\tilde \n\rho,\tilde \n\rho)\\
& -\rho^k(\phi\o^2)_k\tilde\Delta\rho+\rho^k(\phi'\phi^2)_k(|\tilde\n\rho|^2+n\o^2)+n\phi'\phi^2\rho^k(\phi^2)_k\\

  \end{array}
  \label{}
\end{equation}

Now we have,

\begin{equation}
   \begin{array}[]{rll}
     \P_t\frac{|\tilde\nabla \rho|^2}{2} = & \tilde\nabla \rho\tilde\nabla\rho_t \\
     \\
     =& n(\frac{\phi'}{\phi})'|\tilde\n\rho|^2\o+n(\frac{\phi'}{\phi})\tilde\n\rho\tilde\n\o-\tilde H\tilde\n\frac{1}{\phi\o^3}\tilde\n\rho  -\frac{1}{\phi\o^3}\tilde\n\rho\tilde\n \tilde H\\
  \\
  =&\frac{1}{\o^3}(\o^2 \tilde g^{ik}-\rho^i\rho^k)(\frac{|\tilde\n\rho|^2}{2})_{ik}-\frac{1}{\o}|\rho_{ij}|^2-\frac{1}{\o}\tilde Ric(\tilde \n\rho,\tilde \n\rho)\\
  & +\frac{1}{\phi\o^3}\rho^k(\phi\o^2)_k\tilde\Delta\rho-\frac{1}{\phi\o^3}\rho^k(\phi'\phi^2)_k(|\tilde\n\rho|^2+n\o^2)
  -\frac{n}{\phi\o^3}\phi'\phi^2\rho^k(\phi^2)_k\\
&+n(\frac{\phi'}{\phi})'|\tilde\n\rho|^2\o+n(\frac{\phi'}{\phi})\tilde\n\rho\tilde\n\o-\tilde H\tilde\n\frac{1}{\phi\o^3}\tilde\n\rho  \\
  \\

   \end{array}
  \label{}
\end{equation}

  Let $\mathcal L(\psi):= \P_t \psi -\frac{1}{\o^3}(\o^2\tilde g_{ij}-\rho^i\rho^j) \psi_{ij}$ be a parabolic operator for any function $\psi$ defined on $\mathbf B^n$.  Then applying the critical point conditions,

\begin{equation}
   \begin{array}[]{rll}
    \mathcal L(\frac{|\tilde\nabla \rho|^2}{2}) = &-\frac{1}{\o}|\rho_{ij}|^2-\frac{1}{\o}\tilde Ric(\tilde \n\rho,\tilde \n\rho)\\
  & +\frac{1}{\phi\o^3}\rho^k(\phi\o^2)_k\tilde\Delta\rho-\frac{1}{\phi\o^3}\rho^k(\phi'\phi^2)_k(|\tilde\n\rho|^2+n\o^2)-\frac{n}{\phi\o^3}\phi'\phi^2\rho^k(\phi^2)_k\\
&+n(\frac{\phi'}{\phi})'|\tilde\n\rho|^2\o+n(\frac{\phi'}{\phi})\tilde\n\rho\tilde\n\o-\tilde H\tilde\n\frac{1}{\phi\o^3}\tilde\n\rho  \\
  \\
= &-\frac{1}{\o}|\rho_{ij}|^2-\frac{1}{\o}\tilde Ric(\tilde \n\rho,\tilde \n\rho)+n(\frac{\phi'}{\phi})'|\tilde\n\rho|^2\o\\
  & -\frac{(\phi'\phi^2)'}{\phi\o^3}|\tilde\n\rho|^2(|\tilde\n\rho|^2+n\o^2)-\frac{n}{\phi\o^3}\phi'\phi^2(\phi^2)'|\tilde\n\rho|^2+n\frac{(\phi')^2}{\o}|\tilde\n\rho|^2\\
  &-\tilde H\tilde\n\frac{1}{\phi\o^3}\tilde\n\rho  +\frac{1}{\phi\o^3}\rho^k(\phi\o^2)_k\tilde\Delta\rho\\
  \\
= &-\frac{1}{\o}|\rho_{ij}|^2-\frac{1}{\o}\tilde Ric(\tilde \n\rho,\tilde \n\rho)-n(\frac{\phi'}{\phi})'|\tilde\n\rho|^2\o\\
  &  -\frac{(\phi'\phi^2)'}{\phi\o^3}|\tilde\n\rho|^2(|\tilde\n\rho|^2+n\o^2)-\frac{n}{\phi\o^3}\phi'\phi^2(\phi^2)'|\tilde\n\rho|^2+n\frac{(\phi')^2}{\o}|\tilde\n\rho|^2\\
  &+\tilde H (\frac{\phi'}{\phi^2\o^3}+\frac{3\phi'}{\o^5})|\tilde\n\rho|^2  +(\frac{\phi'}{\phi\o}+\frac{2\phi\phi'}{\o^3})|\tilde\n\rho|^2\tilde\Delta\rho\\

   \end{array}
  \label{}
\end{equation}

 At critical points,
 \begin{equation}
    \begin{array}[]{rll}
\tilde H = -\phi\o^2\tilde\Delta\rho+\phi'\phi^2|\tilde\n\rho|^2+n\phi'\phi^2\o^2
    \end{array}
    \label{}
  \end{equation}
and
  \begin{equation}
    \begin{array}[]{rll}
    \mathcal L(\frac{|\tilde\nabla \rho|^2}{2}) = &-\frac{1}{\o}|\rho_{ij}|^2-\frac{\phi\phi'}{\o^3}\tilde\Delta\rho|\tilde\n\rho|^2-\frac{1}{\o}\tilde Ric(\tilde \n\rho,\tilde \n\rho)+n(\frac{\phi'}{\phi})'|\tilde\n\rho|^2\o\\
&  -\frac{(\phi'\phi^2)'}{\phi\o^3}(|\tilde\n\rho|^2+n\o^2)|\tilde\n\rho|^2-\frac{n}{\o^3}\phi'\phi(\phi^2)'|\tilde\n\rho|^2+n\frac{(\phi')^2}{\o}|\tilde\n\rho|^2\\
  &+(\phi')^2(\frac{1}{\o^3}+\frac{3\phi^2}{\o^5})(|\tilde\n\rho|^2+n\o^2) |\tilde\n\rho|^2  \\
\\
= &-\frac{1}{\o}|\rho_{ij}|^2-\frac{\phi\phi'}{\o^3}\tilde\Delta\rho|\tilde\n\rho|^2-\frac{1}{\o}\tilde Ric(\tilde \n\rho,\tilde \n\rho)+n(\frac{\phi'}{\phi})'|\tilde\n\rho|^2\o\\
&-\frac{n}{\o^3}\phi'\phi(\phi^2)'|\tilde\n\rho|^2+n\frac{(\phi')^2}{\o}|\tilde\n\rho|^2+(\frac{3\phi'^2\phi^2}{\o^5}-\frac{\phi''\phi+\phi'^2}{\o^3})(|\tilde\n\rho|^2+n\o^2) |\tilde\n\rho|^2 . \\
\end{array}
    \label{}
  \end{equation}

  Recall the critical point conditions, by rotating the coordinates, we can pick $\rho_1=|\tilde\n\rho|$, thus
  \begin{equation}
    \rho_{11}=0, \mathrm{and}\quad \rho_{1j}=0, \forall j=2,\cdots,n.
    \label{}
  \end{equation}
  Moreover, we can diagonalize $\rho_{jk}$ for $j,k=2,\cdots,n$ at the crical point and $\tilde\Delta \rho:=\sum_{j\ge2}\rho_{jj}$. By completing the square, we have

  \begin{equation}
    \begin{array}[]{rll}
    \mathcal L(\frac{|\tilde\nabla \rho|^2}{2})
    = &-\frac{1}{\o}\sum_{j\ge2}\Big(\rho_{jj}+\frac{1}{2}\frac{\phi\phi'}{\o^2}|\tilde\n\rho|^2\Big)^2+\frac{n-1}{4}\frac{\phi^2\phi'^2}{\o^5}|\tilde\n\rho|^4-\frac{1}{\o}\tilde Ric(\tilde \n\rho,\tilde \n\rho)+n(\frac{\phi'}{\phi})'|\tilde\n\rho|^2\o\\
&-\frac{n}{\o^3}\phi'\phi(\phi^2)'|\tilde\n\rho|^2+n\frac{(\phi')^2}{\o}|\tilde\n\rho|^2+(\frac{3\phi'^2\phi^2}{\o^5}-\frac{\phi''\phi+\phi'^2}{\o^3})(|\tilde\n\rho|^2+n\o^2) |\tilde\n\rho|^2  \\
\\

   = &-\frac{1}{\o}\sum_{j\ge2}\Big(\rho_{jj}+\frac{1}{2}\frac{\phi\phi'}{\o^2}|\tilde\n\rho|^2\Big)^2\\
& +\frac{n-1}{4\o^5}(\phi\phi')^2|\tilde\n\rho|^4-\frac{1}{\o}\tilde Ric(\tilde \n\rho,\tilde \n\rho)+n(\frac{\phi'}{\phi})'|\tilde\n\rho|^2\o\\
&-\frac{2n}{\o^3}(\phi')^2\phi^2|\tilde\n\rho|^2+n\frac{(\phi')^2}{\o}|\tilde\n\rho|^2+(\frac{3\phi'^2\phi^2}{\o^5}-\frac{\phi''\phi+\phi'^2}{\o^3})(|\tilde\n\rho|^2+n\o^2) |\tilde\n\rho|^2  \\
\\
= &-\frac{1}{\o}\sum_{j\ge2}\Big(\rho_{jj}+\frac{1}{2}\frac{\phi\phi'}{\o^2}|\tilde\n\rho|^2\Big)^2-\frac{1}{\o}\tilde Ric(\tilde \n\rho,\tilde \n\rho)\\
&+ \frac{|\tilde\n\rho|^2}{\o^5}\Big[n(\frac{\phi'}{\phi})'\o^6+n\phi'^2\o^4+ \frac{n-1}{4}\phi^2\phi'^2|\tilde\n\rho|^2-2n\phi'^2\phi^2\o^2\\
&+(2\phi'^2\phi^2-\phi'^2|\tilde\n\rho|^2-\phi''\phi\o^2)(|\tilde\n\rho|^2+n\o^2)  \Big]

\end{array}
    \label{}
  \end{equation}
  Notice that

  \begin{equation}
    \begin{array}[]{rll}
&n(\frac{\phi'}{\phi})'\o^6+(2\phi'^2\phi^2-\phi'^2|\tilde\n\rho|^2-\phi''\phi\o^2)(|\tilde\n\rho|^2+n\o^2)\\
=&n(\frac{\phi'}{\phi})'\o^6-(\phi''\phi-\phi'^2)(|\tilde\n\rho|^2+n\o^2)\o^2+(\phi'^2\phi^2-2\phi'^2|\tilde\n\rho|^2)(|\tilde\n\rho|^2+n\o^2)\\
=&n\frac{\phi''\phi-\phi'^2}{\phi^2}|\tilde\n\rho|^4\o^2+(n-1)(\phi''\phi-\phi'^2)|\tilde\n\rho|^2\o^2+(\phi'^2\phi^2-2\phi'^2|\tilde\n\rho|^2)(|\tilde\n\rho|^2+n\o^2)\\

\end{array}
    \label{}
  \end{equation}
Thus

  \begin{equation}
    \begin{array}[]{rll}
    \mathcal L(\frac{|\tilde\nabla \rho|^2}{2})
    = &-\frac{1}{\o}\sum_{j\ge2}\Big(\rho_{jj}+\frac{1}{2}\frac{\phi\phi'}{\o^2}|\tilde\n\rho|^2\Big)^2-\frac{1}{\o}\tilde Ric(\tilde \n\rho,\tilde \n\rho)\\
&+ \frac{|\tilde\n\rho|^2}{\o^5}\Big[n\frac{\phi''\phi-\phi'^2}{\phi^2}|\tilde\n\rho|^4\o^2+(n-1)(\phi''\phi-\phi'^2)|\tilde\n\rho|^2\o^2\\
&
  -(n+2)\phi'^2|\tilde\n\rho|^4-\frac{3}{4}(n-1)\phi^2\phi'^2|\tilde\n\rho|^2 \Big]

\end{array}
    \label{gradient evolution}
  \end{equation}

By the assumption $\tilde Ric\ge (n-1)K\tilde g$ with $K>0$. As far as $\phi'^2-\phi''\phi\ge0$, we have
\begin{align*}
    \mathcal L(\frac{|\tilde\nabla \rho|^2}{2})\le 0.
\end{align*}
By the maximum principle, there is a uniform upper bound for $|\tilde\n\rho|$.  Moreover, with the uniform $C^0$ and gradient estimates, we now have

  \begin{equation}
    \begin{array}[]{rll}
      \mathcal L(\frac{|\tilde\nabla \rho|^2}{2})\le -\frac{1}{\o}\tilde Ric(\tilde \n\rho,\tilde \n\rho)\le -\alpha |\tilde\n\rho|^2,
\end{array}
    \label{}
  \end{equation}
where $\alpha>0$ is a uniform constant depending on the upper bound of $|\tilde\n\rho|^2$. This implies the exponential convergence for the case $K>0$. The exponential convergence also holds for the case $K=0$, which will be dealt with separately in the last section.
\end{proof}

\section{Evolution of support function  and the mean curvature}\label{uH}

In this section, we prove a uniform upper bound estimate for the mean curvature $H$ along the flow. We also prove a uniform positive lower bound for support function $u$ under condition $(\phi^{'})^2-\phi\phi^{"}>0$. Although the results in this section are not needed for proving the main theorem, as important properties of the flow itself, we include them here for completeness and future interests.

Suppose that the metric on $(N^{n+1}, \bar{g})$ is a warped product of the form \eqref{warped metric}. We denote the Riemannian metric and the Levi-Civita connection of $(N^{n+1}, \bar{g})$ by  $\langle \cdot, \cdot\rangle$ and $\bar{\nabla}$, respectively. The conformal Killing field is $X=\bar{\nabla} \Phi$ (recall $\Phi'(r)=\phi(r)$) and
$X$ satisfies
\begin{equation}\label{eq_X} \langle \bar \nabla_Y X, Z\rangle=\bar{\nabla}_Y\bar{\nabla}_Z\Phi =\phi'\langle Y, Z\rangle, \end{equation}  where $\bar{\nabla}\bar{\nabla}\Phi$ is the Hessian of $\Phi$ with respect to $\bar{g}$, and $Y$ and $Z$ are any two vector fields on $N^{n+1}$.

Let $M_t$ by a family of hypersurfaces evolves by  \eqref{eflow0}:

\[ \frac{\partial F}{\partial t}= f\nu,\] where $f$ is given by

\begin{equation}\label{eq_f} f=n \phi'-u H.\end{equation}  The outward unit normal $\nu$ of $M_t$ evolves by
\begin{equation}\label{eq_nu}\frac{\partial \nu}{\partial t}=-\nabla^{M_t} f, \end{equation} where we use $\nabla^{M_t}$ and $\Delta^{M_t}$ to denote the gradient and Laplace operators on $M_t$, with respect to the induced metric.

We first compute the evolution equation of $u$. Note that in view of the evolution equation the relevant parabolic operator for any geometric quantity defined on $M_t$ is $\partial_t - u \Delta^{M_t}$.

We compute using \eqref{eq_X} and \eqref{eq_nu}: \begin{equation}\begin{split}\label{dt_support}\partial_t\langle X, \nu\rangle&= f\langle \bar{\nabla}_\nu X, \nu\rangle+\langle X, \nabla^{M_t}(u H)\rangle-n \langle X, \nabla^{M_t} \phi'|_{M_t}\rangle\\
&=f\phi'|_{M_t}+\langle X, \nabla^{M_t}(u H)\rangle-n \langle X, \nabla^{M_t} \phi'|_{M_t}\rangle.\end{split}\end{equation}

Choosing the orthonormal frame $\{e_i\}_{i=1\cdots n}$ to $M_t$ such that $\nabla^{M_t}_{e_i} e_j=0$ at a point where the following calculation is conducted:
\begin{equation}\label{lap_u1}\Delta^{M_t} u =e_i\big\langle \bar{\nabla}_{e_i} X , \nu\rangle+e_i \langle X, \bar{\nabla}_{e_i} \nu\rangle.\end{equation}

The first term vanishes by \eqref{eq_X}. Recall that $\bar{\nabla}_{e_i} \nu =h_{ij} e_j$ ($\nabla^{M_t}_{e_i} e_j=0$), where $h_{ij}$ is the second fundamental form of $M_t$ and the second term is equal to
\[\begin{split}e_i( h_{ij}\langle X, e_j\rangle)&= e_i(h_{ij})\langle X, e_j\rangle+h_{ij}\langle \nabla_{e_i} X, e_j\rangle+h_{ij}\langle X, \nabla_{e_i} e_j\rangle\\
&=(\nabla_i^{M_t} h_{ij})\langle X, e_j\rangle+\phi' H-\sum h_{ij}^2\langle X, \nu\rangle. \end{split} \]

Plugging this back to \eqref{lap_u1} and multiplying each term by $u=\langle X, \nu\rangle$, we obtain

\begin{equation}\label{elliptic_support} \begin{split}& u \Delta^{M_t} u= u (\nabla_i^{M_t} h_{ij})\langle X, e_j\rangle+u\phi' H-u^2\sum h_{ij}^2. \end{split} \end{equation}

Combining \eqref{dt_support} and \eqref{elliptic_support}, we obtain (we use $\phi'$ to denote $\phi'|_{M_t}$ in the following)

\begin{equation}\label{parabolic_u0}\begin{split} \partial_t u-  u \Delta^{M_t} u& =   u^2\sum h_{ij}^2 -2 \phi' H u +n(\phi')^2+H \langle X, \nabla^{M_t}u \rangle          \\
&+ u  \big[ \langle X,  \nabla^{M_t} H\rangle- (\nabla_i^{M_t} h_{ij})\langle X, e_j\rangle \big] -  n \langle X, \nabla^{M_t} \phi'\rangle \\
&= (\sum h_{ij}^2-\frac{H^2}{n})u^2 +\frac{1}{n}(H u -n \phi' )^2 + H \langle K, \nabla^{M_t} u\rangle \\
&+ u   \big[ \langle X,  \nabla^{M_t} H\rangle- (\nabla_i^{M_t} h_{ij})\langle X, e_j\rangle \big] -  n \langle X, \nabla^{M_t} \phi'\rangle. \\
\end{split}\end{equation}

Note that $ \langle X,  \nabla^{M_t} H\rangle- (\nabla_i^{M_t} h_{ij})\langle X, e_j\rangle $ can be expressed in terms of  $\bar{Ric}(X^\top, \nu)$ where $X^\top=\langle X, e_i\rangle e_i$ is the component of $X$ that is tangential to $M_t$. This is the same as the term that appears in the monotonicity formula.
However, in this case, if we only want to prove that $u>0$ is preserved along the flow, the sign of the term $\bar{Ric}(X^\top, \nu)$ does not matter.

On the other hand, we compute \[\nabla^{M_t}\phi'=\bar{\nabla}\phi'-\langle \bar{\nabla}\phi', \nu\rangle \nu\] and $\bar{\nabla}\phi'=\phi''\bar{\nabla} r=\phi''\partial_r=\frac{\phi''}{\phi} X$. Therefore, \[\langle X, \nabla^{M_t}\phi'\rangle=\frac{\phi''}{\phi}\langle X, X-u \nu\rangle=\frac{\phi''}{\phi} (\phi^2-u^2).\]

Plugging this into \eqref{parabolic_u0}, we obtain

\begin{equation}\label{parabolic_u} \begin{split} \partial_t u-  u \Delta^{M_t} u&= (\sum h_{ij}^2-\frac{H^2}{n})u^2 +\frac{1}{n}(H u -n \phi' )^2 + H \langle K, \nabla^{M_t} u\rangle \\
&+ u   \big[ \langle X,  \nabla^{M_t} H\rangle- (\nabla_i^{M_t} h_{ij})\langle X, e_j\rangle \big] -  n\frac{\phi''}{\phi} (\phi^2-u^2)  \\
\end{split}. \end{equation}

Recall in (\ref{Ricci iden}) and (\ref{Ric normal}), we have derived $R_{\nu e_i}$ and
\begin{align*}
    &\big[ \langle X,  \nabla^{M_t} H\rangle- (\nabla_i^{M_t} h_{ij})\langle X, e_j\rangle \big] =- g^{ij}\bar \nabla_i\Phi \bar R_{j\nu}\\
    =&(n-1)(K-\phi'^2+\phi\phi'')\frac{|\tilde\nabla\rho|^2}{\omega^3}+(\tilde Ric_{ik}-(n-1)K\tilde g_{ik})\frac{\rho_k\rho_i}{\omega^3}\ge 0
\end{align*}
Thus (\ref{parabolic_u}) can be simplified as
\begin{align}
     \partial_t u-  u \Delta^{M_t} u&\ge H \langle K, \nabla^{M_t} u\rangle +  n(\phi'^2-\phi''\phi) -2\phi'H u.
    \label{parabolic_u inequ}
\end{align}

\medskip

We now switch to the evolution of mean curvature $H$ along the flow.
\begin{prop}
    \label{prop : evolution of H}
    Along the flow (\ref{eflow0}), the mean curvature of a graphical hypersurface evolves as the follows
    \begin{align}
        \begin{array}[]{rll}
            \P_t H =&u\Delta H+H\n H\n \Phi +2\n H\n u+\phi'(H^2-n|A|^2)\\
&-\frac{n}{\phi^2}\Big[(n-1)\phi'(K-\phi'^2)+(n-2)\phi\phi'\phi''+\phi^2\phi'''\Big](1-\frac{u^2}{\phi^2})\\
&-n\frac{\phi'}{\phi^6}(\tilde R_{ij}-(n-1)K\tilde g_{ij})\rho_i\rho_ju^2.
        \label{eH}
\end{array}
    \end{align}

\end{prop}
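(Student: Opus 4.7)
The plan is to derive the stated identity by specializing the general mean curvature evolution under a normal flow $\partial_t F=f\nu$ to the function $f=n\phi'-uH$. Tracing the evolution of $h^i_j$ in Proposition \ref{prop2.1} together with $\partial_t g^{ij}=-2fh^{ij}$ yields
\begin{equation*}
\partial_t H=-\Delta f-f|A|^2-f\bar R_{\nu\nu}.
\end{equation*}
Writing $\Delta f=n\Delta\phi'-u\Delta H-H\Delta u-2\langle\nabla u,\nabla H\rangle$ by the Leibniz rule already accounts for the terms $u\Delta H$ and $2\langle\nabla u,\nabla H\rangle$ on the right-hand side of \eqref{eH}.

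The next step is to evaluate $\Delta u$ and $\Delta\phi'$ intrinsically on $M$. Differentiating $u=\langle X,\nu\rangle$ and using the conformal Killing identity $\bar\nabla_YX=\phi'Y$ from Lemma \ref{hessian lemma}, one gets $\nabla_iu=h_{ik}\Phi^k$. A second differentiation combined with the contracted Codazzi identity $\nabla^lh_{li}=\nabla_iH+\bar R_{i\nu}$ (a rearrangement of Lemma \ref{cofactor lemma}) produces
\begin{equation*}
\Delta u=\phi'H-u|A|^2+\langle\nabla H,\nabla\Phi\rangle+\bar R_{k\nu}\Phi^k.
\end{equation*}
Multiplying by $H$ and collecting with $-f|A|^2=-(n\phi'-uH)|A|^2$ produces the cancellation $uH|A|^2-uH|A|^2=0$, leaving $\phi'(H^2-n|A|^2)$ together with the cross-term $H\langle\nabla H,\nabla\Phi\rangle$, which matches the remaining first-line terms in \eqref{eH}. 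For $\Delta\phi'$, write $\nabla\phi'=\phi''\nabla r=(\phi''/\phi)\nabla\Phi$ and use the Hessian formula $\Phi_{ij}=\phi'g_{ij}-uh_{ij}$ from Proposition \ref{prop hessian} along with $|\nabla\Phi|^2=\phi^2-u^2$ to obtain
\begin{equation*}
\Delta\phi'=\phi'''\Big(1-\frac{u^2}{\phi^2}\Big)+\phi''\Big[\frac{n\phi'-uH}{\phi}-\frac{\phi'(\phi^2-u^2)}{\phi^3}\Big].
\end{equation*}

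All that remains is the ambient-curvature block $H\bar R_{k\nu}\Phi^k+uH\bar R_{\nu\nu}-n\phi'\bar R_{\nu\nu}-n\Delta\phi'$. The first two summands combine into $H\,\bar{Ric}(X,\nu)$; since the warped-product Ricci tensor of Lemma \ref{Ricci lemma} has only a $dr^2$-contribution in the radial direction, $\bar{Ric}(X,\nu)=\phi\bar{Ric}(\partial_r,\nu)=-nu\phi''/\phi$, and the resulting $-nuH\phi''/\phi$ cancels precisely the $+nuH\phi''/\phi$ hidden in $-n\Delta\phi'$; all remaining $H$-dependence in the curvature block therefore disappears. Substituting the expression for $\bar R_{\nu\nu}$ from \eqref{Ric normal}, using the identity $u^2/\phi^6=1/(\phi^2\omega^2)$ to recognize the base-Ricci contribution as $-n\phi'(\tilde R_{ij}-(n-1)K\tilde g_{ij})\rho_i\rho_ju^2/\phi^6$, and factoring $(1-u^2/\phi^2)$ out of the remaining warp-function pieces yields the bracketed expression $\phi^2\phi'''+(n-2)\phi\phi'\phi''+(n-1)\phi'(K-\phi'^2)$. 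The main obstacle is this last consolidation: one must track two sources of $\phi'\phi''/\phi$ contributions — those coming from $-n\Delta\phi'$ and those arising when the $n^2\phi'\phi''/\phi$ from $-n\phi'\bar R_{\nu\nu}$ is absorbed — and verify that they combine with coefficient $n-2$, leaving the exact coefficients displayed in \eqref{eH}.
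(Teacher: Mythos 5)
Your proposal is correct and follows essentially the same route as the paper's proof: start from $\partial_t H=-\Delta f-f|A|^2-f\bar R_{\nu\nu}$, expand $\Delta f$ by Leibniz, substitute the formulas for $\Delta u$ (via $\nabla_i u=h_{ik}\Phi^k$ and the contracted Codazzi identity) and $\Delta\phi'$ (via the Hessian of $\Phi$), observe that the $H$-weighted ambient curvature terms cancel, and consolidate the remainder using \eqref{Ric normal}. The only cosmetic difference is that you establish the key cancellation by computing $\bar{Ric}(X,\nu)=-nu\phi''/\phi$ directly from the block-diagonal form of the warped-product Ricci tensor in Lemma \ref{Ricci lemma}, whereas the paper records the equivalent identity $\bar R_{\nu i}\nabla_i\Phi+u(n\phi''/\phi+\bar R_{\nu\nu})=0$ as a consequence of \eqref{Ric normal}.
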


\begin{proof}
    Using Proposition \ref{prop2.1} and replacing the general $f$ by $n\phi'-Hu$, we have
    \[
        \begin{array}[]{rll}
            \P_t H=&-\Delta_gf-f|A|^2-f\bar R_{\nu\nu}\\
            =&-\Delta_g(n\phi'-Hu)-f|A|^2-f\bar R_{\nu\nu}\\
            =&u\Delta H+H\Delta u+2\n H\n u-n\Delta \phi'-f|A|^2-f\bar R_{\nu\nu}\\
            =&u\Delta H+H\Delta u+2\n H\n u-n\Delta \phi'-f|A|^2-f\bar R_{\nu\nu}\\
\end{array}
    \]
    Using Proposition \ref{prop hessian}, we have
    \[
        \begin{array}[]{rll}
            \Delta \phi' =&\frac{\phi''}{\phi} \Delta \Phi + \nabla \frac{\phi''}{\phi}\nabla \Phi\\
            =&\frac{\phi''}{\phi} f + \frac{1}{\phi}( \frac{\phi''}{\phi})'|\nabla \Phi|^2,
        \end{array}
    \]
   where $f=n\phi'-Hu$.

    Combining with Proposition \ref{prop hessian}, we have

    \[
        \begin{array}[]{rll}
            \P_t H =&u\Delta H+H\Big[\n H\n \Phi +\bar R_{\nu i}\n_i\Phi+H\phi'-|A|^2u\Big]\\
            &+2\n H\n u-n\frac{\phi''}{\phi} f -n\frac{1}{\phi}( \frac{\phi''}{\phi})'|\nabla \Phi|^2-f|A|^2-f\bar R_{\nu\nu}\\
            \\
            =&u\Delta H+H\n H\n \Phi +2\n H\n u+H\bar R_{\nu i}\n_i\Phi+\phi'(H^2-n|A|^2)\\
            &-(n\frac{\phi''}{\phi} +\bar R_{\nu\nu})f -n\frac{1}{\phi}( \frac{\phi''}{\phi})'|\nabla \Phi|^2.
\end{array}
    \]

   Replacing $f$ by $n\phi'-Hu$, we get
   \begin{align}
         \begin{array}[]{rll}
            \P_t H =&u\Delta H+H\n H\n \Phi +2\n H\n u+\phi'(H^2-n|A|^2)\\
            &+H\Big[\bar R_{\nu i}\n_i\Phi+u(n\frac{\phi''}{\phi} +\bar R_{\nu\nu})\Big] \\
            &-n\Big[(n\frac{\phi''}{\phi} +\bar R_{\nu\nu})\phi' +\frac{1}{\phi}( \frac{\phi''}{\phi})'|\nabla \Phi|^2\Big].
        \end{array}
       \label{H evolution equ 1}
   \end{align}
   By (\ref{Ric normal}) and the definition of $u=\frac{\phi^2}{\o}$, we find out the mean curvature term in the second line of (\ref{H evolution equ 1}) vanishes, i.e.,
   \begin{align}
\bar R_{\nu i}\n_i\Phi+u(n\frac{\phi''}{\phi} +\bar R_{\nu\nu})=0.
\label{H term vanish}
   \end{align}
   On the other hand, since $|\n \Phi|^2= \frac{\phi^2|\tilde\n\rho|^2}{\o^2}$, using (\ref{Ric normal}) again, we can compute the last line in (\ref{H evolution equ 1}),

   \begin{align}
      & \Big[(n\frac{\phi''}{\phi} +\bar R_{\nu\nu})\phi' +\frac{1}{\phi}( \frac{\phi''}{\phi})'|\nabla \Phi|^2\Big]\nonumber\\
      =&\Big[(n-1)\frac{K-\phi'^2+\phi\phi''}{\phi^2} \phi' +\phi( \frac{\phi''}{\phi})'\Big]\frac{|\tilde\nabla \rho|^2}{\o^2} +(\tilde R_{ij}-(n-1)K\tilde g_{ij})\frac{\rho_i\rho_j}{\phi^2\o^2}\phi'.
       \label{H evolution equ 2}
   \end{align}

 This yields that

 \begin{align}
         \begin{array}[]{rll}
            \P_t H =&u\Delta H+H\n H\n \Phi +2\n H\n u+\phi'(H^2-n|A|^2)\\
&-n\Big[(n-1)\frac{K-\phi'^2+\phi\phi''}{\phi^2} \phi' +\phi( \frac{\phi''}{\phi})'\Big]\frac{|\tilde\nabla \rho|^2}{\o^2} -n(\tilde R_{ij}-(n-1)K\tilde g_{ij})\frac{\rho_i\rho_j}{\phi^2\o^2}\phi'.
\end{array}
     \label{H evolution equ 3}
 \end{align}
 Plugging the definition of $u$ into (\ref{H evolution equ 3}), we finished the proof.
\end{proof}

\begin{rema}
    By direct computations, if the ambient space is substatic, namely satisfying conditions H1-H4 in Brendle's work \cite{Br}, the last two lines in (\ref{eH}) are nonnegative, which immediately implies a  uniform upper bound for the mean curvature by the maximum principle.
\end{rema}
Next, we show that the mean curvature $H$ has a uniform upper bound in general without assuming the substatic conditions for the ambient metric.
\begin{theo}\label{H upper bound} Suppose
     \begin{equation}\label{phi-inc} \phi^{'}(r)>0, \quad \forall r\in (r_0, \bar {r}).\end{equation}   The mean curvature of the graphical hypersurface evolving along the flow (\ref{eflow0}) has a uniform upper bound,
    \[
        \max_{t>0}H(\cdot, t)\le C,
    \]
    where $C$ is a uniform constant which is independent of $t$.

\end{theo}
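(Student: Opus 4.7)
The approach is to apply the parabolic maximum principle to the evolution equation \eqref{eH} for the mean curvature derived in Proposition \ref{prop : evolution of H}. The argument relies on the $C^0$ bound (Proposition \ref{rho bounded}) and the gradient estimate (Theorem \ref{thmgradient}) on the graph function $\rho$, which are available under the assumptions in force.

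First, I would extract from \eqref{eH} the ``bounded'' and the ``main'' contributions. Since $\rho$ and $|\tilde\nabla\rho|$ are uniformly bounded, $u=\phi^2/\omega$ is bounded above and from below away from zero, and all quantities $\phi,\phi',\phi'',\phi''',\tilde R_{ij}\rho_i\rho_j$ appearing on the last two lines of \eqref{eH} are uniformly controlled. The hypothesis $\phi'(r)>0$ on the compact interval $[r_0,\bar r]$ yields a uniform lower bound $\phi'(r)\ge \phi_0'>0$. Hence every term of \eqref{eH} that does not involve $H$, $\nabla H$, or $|A|^2$ is bounded by a constant $C_0$ depending only on the initial data and the fixed geometry of $\phi$ and $\tilde g$.

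Second, note that the key term $\phi'(H^2-n|A|^2)$ on the right-hand side of \eqref{eH} is non-positive, since $n|A|^2\ge H^2$ by the Cauchy--Schwarz inequality and $\phi'>0$. At a spatial maximum of $H$ we have $\nabla H=0$ and $\Delta H\le 0$, so the cross-gradient terms $H\nabla H\cdot\nabla\Phi+2\nabla H\cdot\nabla u$ vanish and $u\Delta H\le 0$. This yields the pointwise differential inequality
\[
\frac{d}{dt}H_{\max}(t)\le \phi'(H^2-n|A|^2)\big|_{\max}+C_0\le C_0.
\]
This bound alone only gives linear growth in $t$, so the final step is to couple the $H$-evolution with that of the support function in order to extract a genuine decay term.

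Third, I would introduce the auxiliary test function $G=Hu$ and analyze $(\partial_t-u\Delta)G$ using the evolution equation \eqref{parabolic_u} for $u$. At a spatial maximum of $G$ the gradient condition $u\nabla H+H\nabla u=0$ eliminates the cross-gradient combination $uH\nabla H\cdot\nabla\Phi+H^2\nabla u\cdot\nabla\Phi$, and the algebraic identity
\[
Hu^2(|A|^2-H^2/n)+u\phi'(H^2-n|A|^2)=f(\phi'H-u|A|^2),\qquad f=n\phi'-Hu,
\]
combined with $|A|^2\ge H^2/n$ and $\phi'\ge\phi_0'>0$, should yield a differential inequality of the form
\[
\frac{d}{dt}G_{\max}\le -c\,G_{\max}^{\alpha}+C
\]
for suitable constants $c,C>0$ and an exponent $\alpha\ge 1$. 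ODE comparison then gives a uniform upper bound on $G_{\max}$, and since $u$ is bounded from below the desired uniform upper bound on $H$ follows.

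The main obstacle is the third step. The weakly non-positive term $\phi'(H^2-n|A|^2)$ vanishes at umbilical configurations, so extracting quantitative decay requires a delicate combination of the $H$-evolution with the positive quadratic term $(|A|^2-H^2/n)u^2$ appearing in $\partial_t u-u\Delta u$. The strict positivity $\phi'\ge \phi_0'>0$ is essential: it guarantees that when $G_{\max}$ is large, so that $f=n\phi'-Hu$ is strongly negative, the combined expression $f(\phi'H-u|A|^2)$ together with the other residual terms does produce a net negative contribution of the right strength. The precise choice of test function and the algebraic bookkeeping needed to close this estimate, together with handling the indefinite sign of the $\phi''$-dependent lower-order terms, constitute the chief technical content of the proof.
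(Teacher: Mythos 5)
Your first two observations are sound and match the preliminary reductions in the paper: the $C^0$ and gradient estimates make all $\phi$-dependent coefficients uniform, $\phi'>0$ gives a positive lower bound $\phi'\ge C_2>0$, and the term $\phi'(H^2-n|A|^2)$ is non-positive by Cauchy--Schwarz. However, your third step --- the one you yourself flag as ``the chief technical content of the proof'' --- is not merely unfinished, it does not go through with the test function $G=Hu$. Computing $\mathcal L(Hu)=u\,\mathcal L(H)+H\,\mathcal L(u)-2u\,\nabla H\cdot\nabla u$ from \eqref{eH} and \eqref{parabolic_u}, the cross-gradient terms do vanish at a spatial maximum of $Hu$, but the zeroth-order part is
\[
u\phi'(H^2-n|A|^2)+Hu^2\bigl(|A|^2-\tfrac{H^2}{n}\bigr)+\tfrac{H}{n}f^2
 \;=\; f\bigl(\phi' H-u|A|^2\bigr)
 \;=\;\tfrac{H}{n}f^2-fu\bigl(|A|^2-\tfrac{H^2}{n}\bigr).
\]
(Your stated identity drops the $\tfrac{H}{n}f^2$ contribution coming from the $\tfrac{1}{n}(Hu-n\phi')^2$ term in \eqref{parabolic_u}.) When $H$ is large, $f=n\phi'-Hu$ is strongly negative and $|A|^2\ge H^2/n$, so \emph{both} terms on the right are non-negative, and $\tfrac{H}{n}f^2\sim \tfrac{u^2}{n}H^3$. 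Thus $\mathcal L(Hu)$ acquires a \emph{positive} cubic-in-$H$ contribution, the exact opposite of the decay $-c\,G_{\max}^\alpha$ you are hoping for; the maximum principle cannot close on $Hu$.

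The paper circumvents this with a different test function, $H+\Phi$, where $\Phi$ is the potential with $\Phi'=\phi$. The critical-point condition $\nabla(H+\Phi)=0$ then replaces the gradient terms $H\nabla H\cdot\nabla\Phi+2\nabla H\cdot\nabla u$ with $-H|\nabla\Phi|^2-2h(\nabla\Phi,\nabla\Phi)$, producing a genuine damping term $-(H+2h_{11}-C_1)|\nabla\Phi|^2$ (in coordinates with $\nabla\Phi$ along $e_1$). A case split on the sign of $\tfrac{H}{2}+2h_{11}$ finishes the argument: in the benign case both terms are $\le 0$; in the other case $h_{11}\le-\tfrac{H}{4}$ forces $n|A|^2-H^2\ge(n+1)h_{11}^2-2h_{11}H\gtrsim H^2$, and this quadratic decay from $-\phi'(n|A|^2-H^2)$ dominates the remaining linear term $|H/2+2h_{11}|\,|\nabla\Phi|^2$. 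This is a genuinely different mechanism from what you propose: the decay comes from the coupling between the gradient of $H$ and $\nabla\Phi$ at the max, not from the evolution of the support function.
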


\begin{proof}
    From the evolution equations of $H$ and $\Phi$, we obtain
    \begin{equation}
        \begin{array}[]{rll}
            \mathcal L(H+\Phi) \le & H\n (H+\Phi)\n \Phi +2\n (H+\Phi)\n u+\phi'(H^2-n|A|^2)\\
            &-H|\nabla \Phi|^2-2\nabla \Phi\n u\\
&-\frac{n}{\phi^2}\Big[(n-1)\phi'(K-\phi'^2)+(n-2)\phi\phi'\phi''+\phi^2\phi'''\Big](1-\frac{u^2}{\phi^2})\\
\\
= & H\n (H+\Phi)\n \Phi +2\n (H+\Phi)\n u+\phi'(H^2-n|A|^2)\\
            &-H|\nabla \Phi|^2-2h(\nabla \Phi,\nabla \Phi)\\
&-\frac{n}{\phi^2}\Big[(n-1)\phi'(K-\phi'^2)+(n-2)\phi\phi'\phi''+\phi^2\phi'''\Big](1-\frac{u^2}{\phi^2})
        \end{array}
        \label{equ 1 : proof theo}
    \end{equation}
    Since $|\n \Phi|^2=\phi^2 (1-\frac{u^2}{\phi^2})$ and functions related to $\phi$ are all uniformly bounded from above and below, we conclude that the term in the last line of euqation (\ref{equ 1 : proof theo}) is uniformly bounded by $C_1|\n\Phi|^2$ where $C_1$ is a uniform constant which does not depend on $t$. Thus
    \begin{equation}
        \begin{array}[]{rll}
            \mathcal L(H+\Phi) \le & H\n (H+\Phi)\n \Phi +2\n (H+\Phi)\n u+\phi'(H^2-n|A|^2)\\
            &-H|\nabla \Phi|^2-2h(\nabla \Phi,\nabla \Phi)+C_1|\n\Phi|^2
        \end{array}
        \label{equ 2 : proof theo}
    \end{equation}

    At a maximum point of the test function $H+\Phi$, we can choose normal coordinates, so that the metric tensor at the point is the identity matrix. If we choose a coordinate system so that the $x_1$ axis direction is the direction of $\n \Phi$, then $\Phi_i=0$, for all $i=2,\cdots,n$.

\begin{equation}
        \begin{array}[]{rll}
            \mathcal L(H+\Phi) \le & -\phi'(n|A|^2-H^2)-(H+2h_{11}-C_1)|\n\Phi|^2
        \end{array}
        \label{equ 3 : proof theo}
    \end{equation}
where we have used the critical point condition to eliminate the gradient terms.

  Without loss of generality, we assume $\frac{H}{2}-C_1\ge 0$, otherwise the test function $H+\Phi$ is uniformly bounded from above by a constant and the proof is done. This reduces (\ref{equ 3 : proof theo}) to

\begin{equation}
        \begin{array}[]{rll}
            \mathcal L(H+\Phi) \le & -\phi'(n|A|^2-H^2)-(\frac{H}{2}+2h_{11})|\n\Phi|^2
        \end{array}
        \label{equ 3.1 : proof theo}
    \end{equation}

Next we consider two different cases.\\

{\bf Case I:} Suppose at the maximum point, $\frac{H}{2}+2h_{11}>0$, then (\ref{equ 3 : proof theo}) is reduced to
\[
        \begin{array}[]{rll}
            \mathcal L(H+\Phi) \le & 0,
        \end{array}
    \]
    and by the maximum principle $H + \Phi$ is bounded, and so is $H$.\\

    {\bf Case II:} Suppose that at the maximum point, $\frac{H}{2}+2h_{11}\le 0$. Let $\lambda_i:= h_{ii}$, denote $A_{*}=\{\lambda_2, \cdots, \lambda_n\}$, $H_{*}=\lambda_{2}+\cdots +\lambda_{n}$. We have $|A|^2 = \lambda_1^2+|A_{*}|^2$, and $H = \lambda_1+H_{*}$. Then
    \begin{equation}
        -\lambda_1\ge \frac{1}{4}H.
        \label{condition : theo}
    \end{equation}
and
\begin{equation}
        \begin{array}[]{rll}
            n|A|^2-H^2=& (n|A_{*}|^2-H_{*}^2)+(n+1)\lambda_1^2-2\lambda_1 H\\
            \ge &(n+1)\lambda_1^2-2\lambda_1 H\\
        \end{array}
        \label{equ 4 : proof theo}
    \end{equation}
    where we used Cauchy-Schwartz inequality in the inequality.

  Recall $ \phi'\ge C_2>0$ and $0\le |\n\Phi|^2\le C_3$. Applying (\ref{equ 4 : proof theo}), (\ref{equ 3.1 : proof theo}) yields
\[
        \begin{array}[]{rll}
            \mathcal L(H+\Phi) \le &-C_2\Big[(n+1)\lambda_1^2-2\lambda_1H\Big] -C_3(\frac{H}{2}+2\lambda_1)\\
            =&  -C_2(n+1)\lambda_1^2-C_3\frac{H}{2}-2(-\lambda_1)(C_2H -C_3)).
        \end{array}
    \]
    We assume that $H>\frac{C_3}{C_2}\ge 0$, otherwise $H$ has an upper bound. This yields
\[
        \begin{array}[]{rll}
            \mathcal L(H+\Phi) \le&0.
        \end{array}
    \]
    By the maximum principle, we conclude that the test function $H+\Phi$ has a uniform upper bound.
\end{proof}

\medskip

By Theorem \ref{H upper bound}, we know that the mean curvature $H$ has a uniform upper bound. If we further assume the strict inequality $\phi'^2-\phi''\phi>0$, then by the uniform $C^0$ estimate and uniform upper bound for $H$, the second term on the right hand side of (\ref{parabolic_u inequ}) is the dominant term as $u\rightarrow 0$. Thus by the maximum principle, we have shown the uniform positivity of the support function $u$ which in turn yields the needed gradient estimate.

\begin{prop}
    \label{gradient by u}
    Let $M_0\subset \mathbf N^{n+1}$ be a smooth graphical hypsurface with support function $u>0$. Assume condition (\ref{phi-inc} )  and
    \begin{align*}
        (\phi')^2-\phi''\phi>0.
    \end{align*}
    then there exists a uniform constant $C>0$ independent of $t$, such that
    \begin{align*}
        u(\cdot,t)\ge C>0
    \end{align*}
    as long as the solution of the flow \eqref{eflow0} exists.
\end{prop}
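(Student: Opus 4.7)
The strategy is a straightforward maximum principle argument applied to the minimum of $u$ over $M_t$, using the evolution inequality \eqref{parabolic_u inequ} that has already been derived in the section, together with the uniform mean curvature bound from Theorem \ref{H upper bound} and the $C^0$ estimate from Proposition \ref{rho bounded}.

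First I would apply Hamilton's trick to the Lipschitz function $u_{\min}(t) := \min_{M_t} u(\cdot,t)$. At a space-time point $(p_0,t_0)$ where the spatial minimum is attained, we have $\nabla^{M_t} u(p_0,t_0)=0$ and $\Delta^{M_t} u(p_0,t_0)\geq 0$, so (assuming $u_{\min}(t_0)>0$) the term $u\Delta^{M_t} u$ on the left of \eqref{parabolic_u inequ} is nonnegative and the gradient term $H\langle X,\nabla^{M_t} u\rangle$ vanishes. Consequently
\begin{equation*}
\frac{d}{dt}\,u_{\min}(t)\Big|_{t=t_0}\;\geq\; n\bigl((\phi')^2-\phi''\phi\bigr)\,-\,2\phi' H\,u_{\min}(t_0).
\end{equation*}

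Next I would control the two terms on the right uniformly. By Proposition \ref{rho bounded}, the radial function stays in a compact subinterval of $[r_0,\bar r\,]$, so by the strict inequality assumption $(\phi')^2-\phi''\phi>0$ there exists $\delta>0$ with $(\phi')^2-\phi''\phi\geq\delta$ everywhere along the flow; the assumption $\phi'>0$ also gives $\phi'\leq C_1$ for a uniform constant. By Theorem \ref{H upper bound} (whose hypothesis $\phi'>0$ is in force), the mean curvature satisfies $H\leq C_2$ uniformly in $t$; note that only the upper bound on $H$ is needed, because if $H<0$ at $(p_0,t_0)$ the term $-2\phi' H u_{\min}$ is actually nonnegative and only strengthens the inequality. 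Putting these together, at any time $t_0$ where $u_{\min}(t_0)>0$,
\begin{equation*}
\frac{d}{dt}\,u_{\min}(t)\Big|_{t=t_0}\;\geq\; n\delta\,-\,2C_1\max(C_2,0)\,u_{\min}(t_0).
\end{equation*}

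Finally I would deduce the lower bound. Set $C_0:=\min\bigl\{u_{\min}(0),\;\tfrac{n\delta}{4 C_1\max(C_2,0)+1}\bigr\}>0$. If $u_{\min}$ ever decreased to the value $C_0$, at the first such time the differential inequality above forces $\frac{d}{dt}u_{\min}\geq n\delta/2>0$, a contradiction. Therefore $u(\cdot,t)\geq C_0>0$ for all $t$ in the existence interval, completing the proof. The only mild technical point is the Hamilton's-trick justification for differentiating $u_{\min}$ in the sense of viscosity or Dini derivatives, which is standard; apart from that, every analytic ingredient (the evolution equation \eqref{parabolic_u inequ}, the $C^0$ bound, and the mean curvature upper bound) has already been established in the paper, so there is no substantive obstacle beyond assembling them.
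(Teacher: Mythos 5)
Your proof is correct and follows exactly the paper's approach: the paper sketches the same maximum-principle argument in the paragraph preceding the Proposition, applying \eqref{parabolic_u inequ} at the minimum of $u$ (where the gradient term drops and $u\Delta^{M_t}u\ge 0$), using the $C^0$ bound of Proposition \ref{rho bounded} to get a uniform lower bound $n((\phi')^2-\phi''\phi)\ge n\delta>0$, and the uniform upper bound on $H$ from Theorem \ref{H upper bound} so that $-2\phi' H u$ is dominated as $u\to 0$. You have simply made the Hamilton's-trick step explicit and tracked the constants carefully, which is precisely what the paper leaves implicit.
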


\section{Proofs of Theorem 1.1 and Theorem 1.2}\label{proofs}

Since \eqref{ivp} is a quasilinear parabolic equation, it follows from Proposition \ref{rho bounded} and Theorem \ref{thmgradient} that the flow is uniformly parabolic. The longtime existence and regularity follow from the standard parabolic theory. The solution converges exponentially to a slice $\rho=constant$ by Theorem \ref{thmgradient}. This proves Theorem \ref{main thm}. Theorem \ref{main thm2} follows from the following proposition.

\begin{prop}\label{p2}
    Let $\O\subset \mathbf N^{n+1}$ be a domain bounded by a smooth graphical hypersurface $M$ and $S(r_0)$.    We assume $\phi(r)$ and  $\tilde g$ satisfy the conditions \eqref{condition} in Theorem \ref{main thm}, then
    \begin{align}
        Area(M)\ge Area (S(r^{*})),
        \label{iso prob2}
    \end{align}
    where $r^{*}$ is the unique real number in $[r_0, \bar{r}]$ such that volume of $B(r^{*})$ enclosed by $S(r^{*})$ and $S(r_0)$ is equal to $Vol(\O)$. If equality in (\ref{iso prob2}) holds, then $M$ must be umblic. If, in addition to \eqref{condition}, $(\phi')^2-\phi''\phi< K \text{ on } [r_0, \bar{r}]$ then  ``=" is attained in \eqref{iso prob} if and only if $M$ is a level set of $r$.
\end{prop}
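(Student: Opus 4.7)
The strategy is to run the flow \eqref{eflow0} of Theorem \ref{main thm} starting from $M$ and exploit the monotonicity of Theorem \ref{thm mono hyperbolic}. By Theorem \ref{main thm}, the flow admits a smooth solution $M(t)$ for all $t\in[0,\infty)$ converging exponentially to a level set $S(c)$ of $r$. By Theorem \ref{thm mono hyperbolic}, the enclosed volume is preserved along the flow, so $S(c)$ together with $S(r_0)$ bounds a domain of volume $\operatorname{Vol}(\Omega)$; the strict monotonicity of $V(r)$ identifies $c=r^*$. Since $A(t)$ is non-increasing,
\[
\operatorname{Area}(M)=A(0)\ \geq\ \lim_{t\to\infty}A(t)=A(r^*)=\operatorname{Area}(S(r^*)),
\]
which is \eqref{iso prob2}.

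For the equality case, suppose $\operatorname{Area}(M)=\operatorname{Area}(S(r^*))$. Then $A(t)\equiv A(0)$, so in particular $A'(0)=0$. From the derivation of the monotonicity in Theorem \ref{thm mono hyperbolic}, $A'(0)$ is expressed as a sum of two integrals: one a positive multiple of $\int_M \bar R_{i\nu}\nabla_i\Phi\,d\mu$, which is pointwise non-positive by \eqref{Ric normal} under \eqref{condition}, and $\int_M(\tfrac{2n}{n-1}\sigma_2-H^2)u\,d\mu$, which is non-positive because $u=\phi^2/\omega>0$ for a graphical hypersurface and by the Newton-MacLaurin inequality. Both integrands must therefore vanish identically on $M$. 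The Newton-MacLaurin equality $H^2=\tfrac{2n}{n-1}\sigma_2$ forces $M$ to be umbilic, which proves the first assertion.

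To upgrade umbilicity to $M$ being a level set under either strict assumption, we unpack \eqref{Ric normal}:
\[
g^{ij}\bar R_{\nu e_i}\nabla_j\Phi=-(n-1)\frac{K-\phi'^2+\phi\phi''}{\omega^3}|\tilde\nabla\rho|^2-\frac{1}{\omega^3}\bigl(\tilde R_{ij}-(n-1)K\tilde g_{ij}\bigr)\rho_i\rho_j.
\]
Under \eqref{condition} the two summands are individually non-positive, so both must vanish pointwise on $M$. If $(\phi')^2-\phi''\phi<K$ strictly, then $K-\phi'^2+\phi\phi''>0$ on $[r_0,\bar r]$ and vanishing of the first summand forces $|\tilde\nabla\rho|\equiv 0$. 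If instead $\tilde{Ric}>(n-1)K\tilde g$ strictly, the tensor $\tilde R_{ij}-(n-1)K\tilde g_{ij}$ is positive definite, and vanishing of the second summand forces $\tilde\nabla\rho\equiv 0$. In either case $\rho$ is constant, hence $M=S(r^*)$. The converse is immediate from the definitions of $\xi$ and $r^*$.

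The proof is essentially a bookkeeping exercise combining Theorem \ref{main thm}, Theorem \ref{thm mono hyperbolic}, and the identity \eqref{Ric normal}; the only slightly delicate point is ensuring the strict curvature hypothesis is used pointwise to pass from the vanishing of the non-positive integrands to the conclusion $\tilde\nabla\rho\equiv 0$. This is where the decomposition of $\bar R_{\nu e_i}\nabla_i\Phi$ into two separately non-positive pieces from \eqref{Ric normal} is essential, since either strict assumption kills exactly one of the two pieces and thus forces $\tilde\nabla\rho\equiv 0$.
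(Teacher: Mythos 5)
Your proof is correct and follows essentially the same route as the paper: run the flow, invoke Theorem~\ref{main thm} for convergence to a slice, Theorem~\ref{thm mono hyperbolic} for volume preservation and area monotonicity, and extract the rigidity statement by forcing equality in the Newton--MacLaurin inequality and in the two non-positive pieces of \eqref{Ric normal}. Your treatment of the equality case, separating the two summands of $g^{ij}\bar R_{\nu e_i}\nabla_j\Phi$ and observing that either strict hypothesis kills the relevant piece, is exactly how the paper deduces $\tilde\nabla\rho\equiv 0$, just spelled out in slightly more detail.
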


\begin{proof} By Theorem \ref{thm mono hyperbolic}, the area of evolving hypersurfaces along flow \ref{eflow0} is decreasing and the enclosed volume is preserved. By Theorem \ref{main thm}, the flow converges to a slice $S(r^{*})$ and we must have $Vol(B(r^{*}))=Vol(\O)$. Note that $Vol(B(r))$ is strictly increasing in $r$, thus $r^{*}$ is unique. This proves inequality (\ref{iso prob2}). If equality holds there, \eqref{mono 2 hyperbolic}
must be an equality. Therefore $\sigma_2=\frac{n-1}{2n}\sigma_1^2$ at every point, this implies that $M$ is umbilic. If either  $(\phi')^2-\phi''\phi< K$ or $\tilde{Ric}> (n-1)K \tilde{g}, \text{ on } [r_0, \bar{r}]$, (\ref{mono 2 hyperbolic}) and (\ref{Ric normal}) imply $\tilde \nabla \rho \equiv 0$. That is, $M$ is a slice. \end{proof}

\section{Conditions on the warping function $\phi$}\label{photon}

We illustrate that both the lower bound and upper bound in \eqref{condition} are necessary in certain sense and have geometric or physics interpretations.

The lower bound of $(\phi')^2-\phi''\phi$ is closely related to the notion of ``photon spheres". For each warped product space with a Riemannian metric $dr^2+\phi^2(r) \tilde{g}_{ij} du^i du^j$, there is an associated static spacetime $\mathfrak{S}$ with the spacetime metric \begin{equation}\label{spacetime}-(\phi'(r))^2 dt^2+dr^2+\phi^2(r) \tilde{g}_{ij} du^i du^j.\end{equation}
Such a spacetime has a natural conformal Killing-Yano two form and is of great interest in general relativity, see \cite[Remark 3.7]{WWZ}.
For the Schwarzschild manifold with the Riemannian metric $\frac{1}{1-\frac{m}{s}}ds^2+s^2 \tilde{g}_{ij} du^i du^j$ (after a change of coordinates $s=\phi(r)$), the associated static spacetime is the Schwarzschild spacetime with the spacetime metric
\[-(1-\frac{m}{s})dt^2+\frac{1}{1-\frac{m}{s}}ds^2+s^2 \tilde{g}_{ij} du^i du^j.\]

We recalled that a hypersurface is said to be \textit{totally umbilical}  if the second fundamental form is proportional to the induced metric (the first fundamental form).
A totally umbilical timelike hypersurface $\mathfrak{T}$ of a spacetime $\mathfrak{S}$ is called a \textit{photon sphere} \cite{CVE}, where null geodesics are trapped (i.e. a null geodesic which is initially tangent to $\mathfrak{T}$  remains within the hypersurface $\mathfrak{T}$). This is easily seen from the following relation: for any null vector field $X$,
\[\nabla^\mathfrak{S}_X X=\nabla^\mathfrak{T}_X X,\] where $\nabla^\mathfrak{S}$ and $\nabla^\mathfrak{T}$ are the covariant derivatives of $\mathfrak{S}$ and $\mathfrak{T}$, respectively.

We claim that the equation $(\phi')^2-\phi \phi''=0$ characterizes exactly the location of the photon sphere.

\begin{prop} For a spacetime $\mathfrak{S}$ with metric of the form \eqref{spacetime}, $(\phi')^2-\phi \phi''=0$ at $r_0$ if and only if $r=r_0$ is a photon sphere.

\end{prop}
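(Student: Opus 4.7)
The plan is to take the definition at face value: a photon sphere in $\mathfrak{S}$ is a totally umbilical timelike hypersurface, and the only natural candidate here is the slice $\mathfrak{T} := \{r = r_0\}$. So the claim reduces to showing that $\mathfrak{T}$ is totally umbilical in $\mathfrak{S}$ equipped with the metric \eqref{spacetime} if and only if $(\phi'(r_0))^2 - \phi(r_0)\phi''(r_0) = 0$. I would first observe that because the spacetime metric splits as $dr^2$ plus a piece depending on $(t, u)$ with $r$-dependent coefficients, the vector field $\nu := \partial_r$ is a unit spacelike normal to every slice $\{r = \text{const}\}$, and the induced metric on $\mathfrak{T}$ is
\[
 g_\mathfrak{T} = -(\phi'(r_0))^2\, dt^2 + \phi(r_0)^2\, \tilde g_{ij}\, du^i du^j,
\]
which is Lorentzian, so $\mathfrak{T}$ is timelike.

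Next I would compute the second fundamental form $h(Y,Z) = \bar g(\bar\nabla_Y \nu, Z)$ on the coordinate basis $\{\partial_t, \partial_{u^i}\}$ of $\mathfrak{T}$ via the Koszul formula. All Lie brackets $[\partial_t, \partial_r]$ and $[\partial_{u^i}, \partial_r]$ vanish, and $\bar g(\partial_r, \partial_t) = \bar g(\partial_r, \partial_{u^i}) = 0$ identically in $r$, so the Koszul identity collapses to
\[
 h(\partial_t, \partial_t) = \tfrac{1}{2}\partial_r\, \bar g(\partial_t, \partial_t) = -\phi'(r_0)\phi''(r_0),
\]
\[
 h(\partial_{u^i}, \partial_{u^j}) = \tfrac{1}{2}\partial_r\, \bar g(\partial_{u^i}, \partial_{u^j}) = \phi(r_0)\phi'(r_0)\,\tilde g_{ij},
\]
and the mixed entries $h(\partial_t, \partial_{u^i})$ vanish because $\bar g(\partial_t, \partial_{u^i}) \equiv 0$.

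Finally, umbilicity $h = \lambda\, g_\mathfrak{T}$ at $r_0$ requires
\[
 -\phi'\phi'' = \lambda\cdot\bigl(-(\phi')^2\bigr) \quad\text{and}\quad \phi\phi'\, \tilde g_{ij} = \lambda\cdot \phi^2\, \tilde g_{ij},
\]
with $\phi, \phi', \phi''$ evaluated at $r_0$. The second equation forces $\lambda = \phi'(r_0)/\phi(r_0)$ (using $\phi > 0$), and substituting into the first gives $(\phi'(r_0))^2 = \phi(r_0)\phi''(r_0)$. Conversely, if $(\phi')^2 - \phi\phi'' = 0$ at $r_0$, the common value $\lambda = \phi'(r_0)/\phi(r_0)$ satisfies both equations simultaneously, so $h = \lambda\, g_\mathfrak{T}$ and $\mathfrak{T}$ is totally umbilical.

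There is no substantial obstacle: the whole argument is a bare-hands Koszul computation adapted to the warped-product structure. The only point to watch is that umbilicity in a Lorentzian ambient space still means $h$ is proportional to the induced metric, so the single proportionality constant $\lambda$ must work in both the timelike $\partial_t$ direction and the spacelike base directions $\partial_{u^i}$ — and it is precisely the compatibility of these two requirements that forces the algebraic condition $(\phi')^2 = \phi\phi''$ at $r_0$.
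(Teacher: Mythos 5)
Your proof is correct and follows essentially the same route as the paper: compute the second fundamental form of the slice $\mathfrak{T}=\{r=r_0\}$ via $h(\partial_a,\partial_b)=\tfrac12\partial_r\,\bar g(\partial_a,\partial_b)$, obtaining $-\phi'\phi''\,dt^2+\phi\phi'\,\tilde g_{ij}\,du^i du^j$, and compare with the induced metric $-(\phi')^2 dt^2+\phi^2\tilde g_{ij}\,du^i du^j$. You merely spell out the final umbilicity comparison (solving for the proportionality factor $\lambda=\phi'/\phi$ and substituting back) more explicitly than the paper's ``comparing this with \eqref{induced_metric} yields the desired conclusion.''
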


\begin{proof} It suffices to prove that the hypersurface $\mathfrak{T}$ defined by $r=r_0$ is totally umbilical. The induced metric of $\mathfrak{T}$ is
\begin{equation}\label{induced_metric}-(\phi')^2 dt^2+\phi^2 \tilde{g}_{ij} du^i du^j\end{equation} and the unit outward normal of $\mathfrak{T}$ is $\frac{\partial}{\partial r}$.  We compute
\[\langle\nabla^\mathfrak{S}_{\partial_ t} \partial_r, \partial_t\rangle=\frac{1}{2} \partial_r (-(\phi'))^2=-\phi'\phi''.\]
and  \[\langle\nabla^\mathfrak{S}_{\partial_i} \partial_r,  \partial_j\rangle=\frac{1}{2} \partial_r (\phi^2 \tilde{g}_{ij})=\phi \phi'\tilde{g}_{ij},\] therefore the second fundamental form of $\mathfrak{T}$ is
\[ -\phi'\phi'' dt^2+\phi \phi'\tilde{g}_{ij} du^i du^j.\] Comparing this with \eqref{induced_metric} yields the desired conclusion.
\end{proof}

\subsection{The stability condition}\label{stability}

We recall the following notion of ``stable constant mean curvature" (stable CMC).
A hypersurface $M^n$ in $(N^{n+1}, \bar{g})$ is  a {\it stable CMC} if (1) $M^n$ is of constant mean curvature and (2) the following expression is non-negative for any smooth function $u$ on $M$ with $\int_M u=0$,
\begin{equation}\label{2nd_var}\int_M \left[ |\nabla u|^2- (|h|^2+\bar{Ric}(\nu, \nu)) u^2\right], \end{equation}
where $|h|^2$ is the norm square of the second fundamental form of $M$, $\bar{Ric}$ is the Ricci curvature of $(N^{n+1}, \bar{g})$, and $\nu$ is
the outward unit normal of $M$. The formula is  the same as the second variation for minimal submanifolds (see for example equation (1) on page 11 of \cite{CY}) except one requires the additional condition $\int_M u=0$, which corresponds to the volume constraint.

Suppose $(N^{n+1}, \bar{g})$ is a warped product and $\bar{g}=dr^2+\phi^2(r)\tilde{g}$ where $\tilde{g}$ is a Riemannian metric on a closed  manifold $B$. We recall that the first eigenvalue $\lambda_1(\tilde{g})$ of $\tilde{g}$ is defined to be

\[ \lambda_1(\tilde{g})=\inf_{\int_B u=0} \frac{\int_B |\nabla u|^2}{\int_B u^2}.\]

\begin{prop} Suppose $(N^{n+1}, \bar{g})$ is a warped product with $\bar{g}=dr^2+\phi^2(r)\tilde{g}$ and $M$ is a level set of $r$, say $M=\{ r=r_1\}$. If
\begin{equation}\label{stable_condition}
\begin{aligned}
       &\lambda_1(\tilde{g})\geq n( (\phi')^2-\phi \phi'') \text{ at } r=r_1,
    \end{aligned}
    \end{equation}
then $M$ is a stable CMC.
\end{prop}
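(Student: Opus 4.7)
The plan is to reduce the second variation integral \eqref{2nd_var} on the slice $M=\{r=r_1\}$ to a Rayleigh-quotient inequality on the base manifold $(B,\tilde g)$, and then recognize the hypothesis as exactly what is needed for that inequality to hold.

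First I would record the basic geometry of the slice. The induced metric is $g = \phi^2(r_1)\,\tilde g$, and since the outward unit normal is $\partial_r$, Lemma \ref{hessian lemma} (with $\eta = 1$, or directly from the warped product structure) gives $\bar\nabla_{e_i}\partial_r = (\phi'/\phi)\,e_i$ for $e_i$ tangent to $M$. Thus the second fundamental form is $h_{ij} = (\phi'(r_1)/\phi(r_1))\,g_{ij}$, so $M$ is totally umbilical with constant mean curvature $H = n\phi'(r_1)/\phi(r_1)$ (so condition (1) of being a stable CMC is immediate), and $|h|^2 = n(\phi')^2/\phi^2$ at $r=r_1$. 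By Lemma \ref{Ricci lemma}, $\bar{Ric}(\partial_r,\partial_r) = -n\phi''/\phi$. Consequently
\begin{equation*}
|h|^2 + \bar{Ric}(\nu,\nu) \;=\; \frac{n}{\phi^2}\bigl((\phi')^2 - \phi\phi''\bigr)
\end{equation*}
evaluated at $r=r_1$, which is a constant on $M$.

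Next I translate the integral \eqref{2nd_var} from $(M,g)$ to $(B,\tilde g)$. Because $g = \phi^2(r_1)\tilde g$ is a constant rescaling, for any smooth $u$ on $M \cong B$ we have $|\nabla u|_g^2 = \phi^{-2}(r_1)|\tilde\nabla u|^2_{\tilde g}$ and $d\mu_g = \phi^{n}(r_1)\,d\mu_{\tilde g}$. Moreover the constraint $\int_M u\,d\mu_g = 0$ becomes simply $\int_B u\,d\mu_{\tilde g} = 0$. Combining these, the second variation reduces to
\begin{equation*}
\phi^{n-2}(r_1)\Bigl[\int_B |\tilde\nabla u|^2\,d\mu_{\tilde g} \;-\; n\bigl((\phi')^2 - \phi\phi''\bigr)\big|_{r=r_1}\int_B u^2\,d\mu_{\tilde g}\Bigr].
\end{equation*}

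Finally, by the variational definition of $\lambda_1(\tilde g)$, the bracketed quantity is non-negative for every $u$ with $\int_B u\,d\mu_{\tilde g} = 0$ precisely when $\lambda_1(\tilde g) \geq n((\phi')^2 - \phi\phi'')$ at $r=r_1$, which is the hypothesis \eqref{stable_condition}. Hence $M$ satisfies the stability inequality and condition (2), completing the proof. There is no real obstacle here: the argument is a direct unpacking of the definition, the only mild care needed is in keeping track of the conformal factor $\phi^2(r_1)$ when rewriting gradients and volume forms, and in noticing that the mean-zero constraint is invariant under this constant rescaling.
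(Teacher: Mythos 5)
Your proof is correct and takes essentially the same approach as the paper: compute the constant $|h|^2 + \bar{Ric}(\nu,\nu) = n\phi^{-2}((\phi')^2-\phi\phi'')$ on the slice, relate the Rayleigh quotient on $(M,g)$ to that on $(B,\tilde g)$ via the constant conformal rescaling $g=\phi^2(r_1)\tilde g$, and invoke the hypothesis. You simply spell out more explicitly the rescaling of $|\nabla u|^2$, $d\mu$, and the mean-zero constraint, which the paper compresses into the remark that the first eigenvalue of $M$ is $\lambda_1(\tilde g)/\phi^2$.
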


\begin{proof}

The induced metric on $M$ is
$g_{ij}=\phi^2 (r_1) \tilde{g}_{ij}$ and the second fundamental form of $M$ is $h_{ij}=\phi(r_1)\phi'(r_1) \tilde{g}_{ij}$. Therefore,

\[|h|^2=g^{ij} g^{kl} h_{ik} h_{jl}= n\phi^{-2}(\phi')^2.\]

The outward unit normal of $M$ is $\nu=\partial_r $ and by Lemma \ref{Ricci lemma}, we have
\[\bar{Ric}(\nu, \nu)=-n\frac{\phi''}{\phi}.\]

Note that the first eigenvalue of $M$ is $\frac{\lambda_1(\tilde{g})}{\phi^2}$.  The expression \eqref{2nd_var} can be estimated by

\[\int_M \left[ |\nabla u|^2- u^2(|h|^2+\bar{Ric}(\nu, \nu))\right] \geq  [\lambda_1(\tilde{g}) -n((\phi')^2-\phi \phi'')] \int_M \frac{u^2}{\phi^2}, \] which is non-negative by the assumption.

\end{proof}

\begin{rema}

The condition \eqref{condition} is also necessary. Were it violated, one can take a first eigenfunction $u$ of $M$ with $\int_M u=0$ and deform the level set $M$ with the speed function $u$. The will have the effect of fixing the volume constraint and decreasing the surface area. We refer to \cite{LW} for a concrete example and also \cite{E, Ri} for the stability condition.
\end{rema}

That the condition  $ K \geq  (\phi')^2-\phi \phi''$ implies the stability follows from a theorem of Lichnerowicz \cite{L}:
\begin{theo}
Let $(B, \tilde{g}) $ be an n-dimensional  closed Riemannian manifold with positive Ricci curvature
$\tilde{Ric} \geq (n-1)K\tilde{g}$ for a constant  $K > 0$. The first eigenvalue of $\tilde{g}$ satisfies \[\lambda_1(\tilde{g}) \geq nK.\]
\end{theo}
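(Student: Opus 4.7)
The plan is to prove this classical Lichnerowicz estimate via the Bochner formula applied to a first eigenfunction, combined with an integration by parts and a Cauchy--Schwarz bound on the full Hessian in terms of its trace.

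First, I would fix a first eigenfunction $u$ on $B$, so that $\tilde{\Delta} u = -\lambda_1 u$ with $\lambda_1=\lambda_1(\tilde{g})$, and recall the Bochner identity
\begin{equation*}
\tfrac{1}{2}\tilde{\Delta}|\tilde{\nabla} u|^2 = |\tilde{\nabla}^2 u|^2 + \langle \tilde{\nabla} u, \tilde{\nabla}\tilde{\Delta} u\rangle + \tilde{Ric}(\tilde{\nabla} u, \tilde{\nabla} u).
\end{equation*}
Since $B$ is closed, integrating this identity kills the left-hand side. The cross term becomes $-\lambda_1 |\tilde{\nabla} u|^2$ after substituting the eigenvalue equation, and integration by parts identifies $\int_B |\tilde{\nabla} u|^2 = \lambda_1 \int_B u^2$.

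Next, I would bound the Hessian term from below using the pointwise Cauchy--Schwarz inequality $|\tilde{\nabla}^2 u|^2 \geq \frac{(\tilde{\Delta} u)^2}{n} = \frac{\lambda_1^2 u^2}{n}$, and bound the Ricci term from below by the hypothesis $\tilde{Ric} \geq (n-1)K\tilde{g}$. Combining these with the integrated Bochner identity yields
\begin{equation*}
0 \geq \tfrac{\lambda_1^2}{n}\!\int_B u^2 - \lambda_1 \!\int_B |\tilde{\nabla} u|^2 + (n-1)K\!\int_B |\tilde{\nabla} u|^2,
\end{equation*}
and substituting $\int_B |\tilde{\nabla} u|^2 = \lambda_1 \int_B u^2$ reduces this to $\tfrac{(n-1)\lambda_1^2}{n}\int_B u^2 \geq (n-1)K\lambda_1 \int_B u^2$, which gives $\lambda_1 \geq nK$ after dividing by the positive quantity $(n-1)\lambda_1 \int_B u^2$.

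This proof is essentially routine; the only subtlety is verifying that the pointwise trace inequality $|\tilde{\nabla}^2 u|^2 \geq (\tilde{\Delta} u)^2/n$ is applied correctly (Cauchy--Schwarz on the symmetric operator $\tilde{\nabla}^2 u$ against $\tilde{g}$). There is no real obstacle here; the argument is self-contained and uses only the Bochner formula, divergence theorem, and the Ricci lower bound. If one also wanted the rigidity statement (equality forcing $(B,\tilde{g})$ to be a round sphere), one would have to chase the equality cases in Cauchy--Schwarz and conclude $\tilde{\nabla}^2 u = -\tfrac{\lambda_1}{n} u \, \tilde{g}$ and invoke Obata's theorem, but this is not required for the statement above.
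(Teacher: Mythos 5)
Your Bochner-formula argument is correct and is the standard proof of Lichnerowicz's estimate; the paper itself does not reproduce a proof but simply cites the original reference \cite{L}. Each step checks out: integrating the Bochner identity over the closed manifold $B$, substituting $\tilde\Delta u=-\lambda_1 u$, using $\int_B|\tilde\nabla u|^2=\lambda_1\int_B u^2$, and bounding $|\tilde\nabla^2 u|^2\geq(\tilde\Delta u)^2/n$ and $\tilde{Ric}(\tilde\nabla u,\tilde\nabla u)\geq(n-1)K|\tilde\nabla u|^2$ gives $\frac{n-1}{n}\lambda_1^2\int_B u^2\geq(n-1)K\lambda_1\int_B u^2$, from which $\lambda_1\geq nK$ follows upon dividing by $(n-1)\lambda_1\int_B u^2>0$ --- which is legitimate since $n\geq 2$ in the paper's setting and $\lambda_1>0$ because the eigenfunction has mean zero, hence is nonconstant. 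The only implicit ingredient is that the variational infimum defining $\lambda_1(\tilde g)$ is attained by a smooth eigenfunction on a closed manifold, a standard fact from elliptic theory.
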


\section{The convergence of the flow}

To end the paper, we discuss the convergence of flow (\ref{eflow0}) when $K=0$ in (\ref{condition}). For example, then base manfold $\mathbf B^n$ is Ricci flat.  As we assume $0\le (\phi^{'})^2-\phi\phi^{''}\le K$, this forces $(\phi^{'}(r))^2=\phi(r)\phi^{''}(r), \forall r\in [r_0,\bar r]$.  Note that Proposition \ref{rho bounded} still holds in this case.

\begin{prop}\label{gradient0}
Assume $K=0$ in (\ref{condition}) and $(\phi^{'}(r))^2=\phi(r)\phi^{''}(r), \forall r\in [r_0,\bar r]$. Then flow (\ref{eflow0}) exists for all $t\in [0.\infty)$ and converges to a slice $\rho=c$ in $C^k, \forall k$ as $t\to \infty$.
\end{prop}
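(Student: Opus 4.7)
The ODE $(\phi')^2=\phi\phi''$ with $\phi>0$ forces $\phi(r)=Ae^{cr}$ with $c\ge 0$, so the ambient $(\mathbf N^{n+1},\bar g)$ is either a Riemannian product (when $c=0$) or a hyperbolic-type warped product (when $c>0$), and the hypothesis $\tilde{Ric}\ge (n-1)K\tilde g=0$ makes $\mathbf B^n$ non-negatively Ricci curved. For long-time existence I note that Proposition \ref{rho bounded} gives the uniform $C^0$-bound, and the proof of Theorem \ref{thmgradient} uses only $(\phi')^2-\phi\phi''\ge 0$ and $\tilde{Ric}\ge 0$; these still yield $\mathcal{L}(|\tilde\nabla\rho|^2/2)\le 0$ at a spatial maximum, so the uniform $C^1$-bound persists (only the exponential-decay conclusion of Theorem \ref{thmgradient} is lost, and this is a quantitative rather than qualitative loss). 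With $C^0$ and $C^1$ bounds in hand, \eqref{ivp} is uniformly quasilinear parabolic, and Krylov--Safonov plus Schauder bootstrap give uniform $C^{k,\alpha}$ bounds for every $k$; standard continuation then yields existence on $[0,\infty)$.

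For convergence when $c>0$, I revisit the gradient evolution \eqref{gradient evolution}. Setting $(\phi')^2=\phi\phi''$ annihilates both coefficients carrying the factor $(\phi''\phi-\phi'^2)$, and combining with $\tilde{Ric}\ge 0$ collapses the inequality at a spatial maximum of $|\tilde\nabla\rho|^2$ to
\[\mathcal{L}\Bigl(\tfrac{1}{2}|\tilde\nabla\rho|^2\Bigr)\;\le\; -\,\frac{\phi'^2\,|\tilde\nabla\rho|^4}{\omega^5}\Bigl[(n+2)\,|\tilde\nabla\rho|^2+\tfrac{3}{4}(n-1)\phi^2\Bigr].\]
Since $\phi$ and $\phi'$ are bounded below by positive constants on the range of $\rho$, the parabolic maximum principle gives the ODI $g'(t)\le -c_0\,g(t)^2$ for $g(t):=\max_M|\tilde\nabla\rho(\cdot,t)|^2$, hence $g(t)=O(1/t)$ as $t\to\infty$.

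The degenerate case $c=0$ is handled separately: now $\phi\equiv\phi_0$, the ambient space is the Riemannian product $[r_0,\bar r]\times\mathbf B^n$, and \eqref{ivp} reduces to the divergence-form equation $\partial_t\rho=\tilde\nabla\!\cdot\!(\tilde\nabla\rho/\omega)$ on the closed Ricci-flat base. This is the constrained $L^2$-gradient flow for the area functional $E(\rho):=\int_{\mathbf B^n}\omega\,d\tilde\mu$ under $\int_{\mathbf B^n}\rho\,d\tilde\mu=\text{const}$; since $E$ is bounded below one obtains $\int_0^\infty\!\int_{\mathbf B^n}(\partial_t\rho)^2\,d\tilde\mu\,dt<\infty$, and any $C^k$-subsequential limit solves $\tilde\nabla\!\cdot\!(\tilde\nabla\rho/\omega)=0$; testing against $\rho$ and integrating by parts gives $\int|\tilde\nabla\rho|^2/\omega=0$, so $\tilde\nabla\rho\equiv 0$. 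In either case every $C^k$-subsequential limit is a constant, and by volume preservation (Theorem \ref{thm mono hyperbolic}) this constant equals the unique $r^*\in[r_0,\bar r]$ with $V(r^*)=V(\rho_0)$; thus the limit is unique. Interpolating between the resulting $C^0$-convergence $\rho(\cdot,t)\to r^*$ and the uniform $C^{k+1}$-bound upgrades this to $C^k$-convergence for every $k$.

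The principal obstacle is that the strict positivity of $\tilde{Ric}$ and of $(\phi')^2-\phi\phi''$, which powered the exponential decay in Theorem \ref{thmgradient}, is unavailable here. The crux of the argument is to extract polynomial gradient decay from the surviving algebraic $\phi'^2$-term in \eqref{gradient evolution} whenever $\phi$ is non-constant, and to dispose of the residual product-space case by the energy/gradient-flow viewpoint; without these refinements one would only obtain subsequential convergence.
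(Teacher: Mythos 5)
Your proposal is correct and follows essentially the same route as the paper: reduce $(\phi')^2=\phi\phi''$ to $\phi=ae^{br}$, reuse the $C^0$ bound and the gradient evolution \eqref{gradient evolution} for long-time existence, and split into $b\neq 0$ (where the surviving $\phi'^2$ terms force $\mathcal L(\tfrac12|\tilde\nabla\rho|^2)\le -C(|\tilde\nabla\rho|^2)^2$ and hence $O(1/t)$ decay) versus $b=0$ (an energy argument). The only real deviation is in the $b=0$ case: the paper multiplies $\partial_t\rho=\tilde\nabla\!\cdot(\tilde\nabla\rho/\omega)$ by $\rho$ and integrates, obtaining directly that $\int_{\mathbf B^n}|\tilde\nabla\rho|^2/\omega$ has finite time integral and hence (via regularity bounds) tends to zero, whereas you test against $\partial_t\rho$ using the gradient-flow structure of the area functional and then argue through subsequential limits and uniqueness via volume preservation; both are valid, the paper's version being marginally more direct in yielding convergence along the full time ray, while yours makes the variational structure explicit. (Your restriction to $c\ge 0$ is not needed — the paper allows $b\in\mathbb R$, and the $b\ne 0$ argument is indifferent to the sign.)
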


\begin{proof} Identity $(\phi^{'}(r))^2=\phi(r)\phi^{''}(r)$ is equivalent to $(\log \phi(r))^{''}=0$. That is,
\begin{equation}\label{phiab} \phi(r)=ae^{br}, \end{equation} for some constants $a>0, b\in \mathbb R$. In the proof of Theorem \ref{thmgradient}, at the critical point of $|\nabla \rho|^2$, (\ref{gradient evolution}) holds,
\begin{equation}
    \begin{array}[]{rll}
    \mathcal L(\frac{|\tilde\nabla \rho|^2}{2})
    = &-\frac{1}{\o}\sum_{j\ge2}\Big(\rho_{jj}+\frac{1}{2}\frac{\phi\phi'}{\o^2}|\tilde\n\rho|^2\Big)^2-\frac{1}{\o}\tilde Ric(\tilde \n\rho,\tilde \n\rho)\\
&+ \frac{|\tilde\n\rho|^2}{\o^5}\Big[n\frac{\phi''\phi-\phi'^2}{\phi^2}|\tilde\n\rho|^4\o^2+(n-1)(\phi''\phi-\phi'^2)|\tilde\n\rho|^2\o^2\\
&
  -(n+2)\phi'^2|\tilde\n\rho|^4-\frac{3}{4}(n-1)\phi^2\phi'^2|\tilde\n\rho|^2 \Big]
\end{array}
    \end{equation}
It follows that \[\frac{|\tilde\nabla \rho|^2}{2}\le \max_{t=0}\frac{|\tilde\nabla \rho|^2}{2}.\]
Therefore, by the standard theory of parabolic quasilinear PDE, we have regularity estimates in $C^k$ for all $k\ge 1$ and the flow exists all time.

To show the convergence, we only need to show $\|\tilde \nabla \rho\|_{L^{\infty}} \to 0$ as $t\to \infty$. First, if $b\neq 0$ in (\ref{phiab}), by the $C^0$ estimate, we have
\begin{equation}
    \mathcal L(\frac{|\tilde\nabla \rho|^2}{2})
    \le -\frac{|\tilde\n\rho|^2}{\o^5}\Big[
  (n+2)\phi'^2|\tilde\n\rho|^4+\frac{3}{4}(n-1)\phi^2\phi'^2|\tilde\n\rho|^2 \Big]\le -C(\frac{|\tilde\nabla \rho|^2}{2})^2,
    \end{equation}
    for some $C>0$.
By standard ODE comparison to the equation $f^{'}(t)=-Cf^2(t)$, we get \begin{equation}\label{gg0}
    \di\max_{M(t)} |\tilde\nabla \rho|^2\le \frac{\max_{M(0)}|\tilde\nabla \rho|^2}{Ct\max_{M(0)}|\tilde\nabla \rho|^2+1},
\end{equation}
for some $C>0$ which is independent of $t$.

If $b=0$ in (\ref{phiab}), then $\phi=a>0$ is a constant function. In this case, $H=-\frac{1}{a}\tilde\nabla(\frac{\tilde \nabla \rho}{\sqrt{a^2+|\tilde \nabla \rho|^2}})$.  Evolution equation (\ref{eflow0}) becomes
\[\rho_t=\tilde\nabla(\frac{\tilde \nabla \rho}{\sqrt{a^2+|\tilde \nabla \rho|^2}}).\]
Multiply $\rho$ in above equation, then integrate over $[0, t]\times B^n$,
\[\frac12(\int_{B^n}\rho^2(t,.)-\int_{B^n}\rho^2(0,.))=-\int_0^t\int_{B^n}\frac{|\tilde \nabla \rho(t,.)|^2}{\sqrt{a^2+|\tilde \nabla \rho(t,.)|^2}}dt.\]
The left hand side is bounded as $t\to \infty$, by regularity estimate, we must have
\begin{equation}\label{gg00}\int_{B^n}\frac{|\tilde \nabla \rho(t,.)|^2}{\sqrt{a^2+|\tilde \nabla \rho(t,.)|^2}}\to 0, \quad t\to \infty.\end{equation}
That is $\tilde \nabla \rho(t,.)\to 0$ in $L^2$, regularity estimates imply $\tilde \nabla \rho(t,.)\to 0$ in $L^{\infty}$.

In conclusion, evolution equation \eqref{eflow0} with $M$ as the initial data
has a smooth solution for $t\in [0,\infty)$ and the solution hypersurfaces converge to a level set of $r$ as $t\rightarrow \infty$. \end{proof}

As a consequence, in the case of $K=0$ in (\ref{condition}),  if $\O\subset \mathbf N^{n+1}$ is a domain bounded by a smooth graphical hypersurface $M$ and $S(r_0)$, then there exist a function $\xi$ such that
    \begin{align}
        Area(M)\ge \xi(Vol(\O)).
    \end{align}

\medskip

{\bf Acknowledgements.} The authors would like to thank Greg Galloway for the reference \cite{CVE} on photon surfaces and Frank Morgan for his comments
on the isoperimetric problem.
 
  \end{document}